\pdfoutput=1
\documentclass[11pt,a4paper,twoside]{article}
\usepackage{amsfonts,amsmath,amssymb}
\usepackage{theorem}
\usepackage{cite}
\usepackage{graphicx}
\usepackage{indentfirst}
\usepackage{placeins}

\usepackage[T1]{fontenc}
\usepackage[utf8]{inputenc}
\usepackage[english]{babel}
\newcommand\PaperShortTitle{Diagonal recovery from noisy Fourier--Hermite coefficients}
\newif\ifsubmission
\submissiontrue
\providecommand\PaperRevision{?}
\providecommand\PaperRevisionDate{\number\year-\ifnum\month<10 0\fi\number\month-\ifnum\day<10 0\fi\number\day}
\providecommand\PaperPreparedBy{(set by make paper-pdf)}
\providecommand\PaperRef{(set by make paper-pdf)}
\InputIfFileExists{revision.tex}{}{}
\usepackage{xcolor}
\definecolor{papermetafill}{HTML}{F4F6F8}
\definecolor{papermetaframe}{HTML}{C7D0D9}
\newcommand\PaperMeta{\ifsubmission\else
  \begin{center}
  \setlength{\fboxsep}{5pt}\setlength{\fboxrule}{0.5pt}%
  \fcolorbox{papermetaframe}{papermetafill}{\footnotesize\foreignlanguage{english}{%
    \begin{tabular}{@{}l@{\hspace{10pt}}l@{}}
    \textbf{Ref} & \PaperRef\\
    \textbf{Date} & \PaperRevisionDate\\
    \textbf{Type} & working draft, edition \PaperRevision; not for distribution\\
    \textbf{Prepared} & \PaperPreparedBy
    \end{tabular}}}%
  \end{center}\fi}
\ifsubmission\else
\usepackage{fancyhdr}
\usepackage{lastpage}
\fancypagestyle{plain}{%
  \fancyhf{}%
  \fancyfoot[C]{\thepage\\[3pt]{\fontsize{6}{7}\selectfont\color{gray}%
    \PaperShortTitle\ \textbullet\ working draft, edition \PaperRevision,
    \PaperRevisionDate\ \textbullet\ \PaperPreparedBy\ \textbullet\ p.~\thepage\ of \pageref{LastPage}\\%
    \PaperRef\ \textbullet\ not for distribution}}}
\AtBeginDocument{\ifdefined\hypersetup\hypersetup{pdfauthor={\PaperPreparedBy}}%
  \else\pdfinfo{/Author (\PaperPreparedBy)}\fi}
\fi

\theorembodyfont{\sl}
\newtheorem{theorem}{Theorem}[section]
\newtheorem{lemma}[theorem]{Lemma}

\newtheorem{remark}[theorem]{Remark}

\numberwithin{equation}{section}

\usepackage{calc}
\numberwithin{theorem}{section}

\newcommand{\card}{\operatorname{card}}
\newcommand{\bxi}{\mbox{\boldmath$\xi$\unboldmath}}

\makeatletter
\renewenvironment{thebibliography}[1]
 {\section*{\centerline{\rm\bfseries References}}%
 \@mkboth{\MakeUppercase\refname}{\MakeUppercase\refname}%
 \list{\@biblabel{\@arabic\c@enumiv}}%
 {\settowidth\labelwidth{\@biblabel{#1}}%
 \leftmargin\labelwidth
 \advance\leftmargin\labelsep
 \@openbib@code
  \usecounter{enumiv}%
  \let\p@enumiv\@empty
  \renewcommand\theenumiv{\@arabic\c@enumiv}}%
  \sloppy
  \clubpenalty4000
  \@clubpenalty \clubpenalty
  \widowpenalty4000%
  \sfcode`\.\@m
  \setlength{\itemsep}{-0.1cm}}
  {\def\@noitemerr
  {\@latex@warning{Empty 'thebibliography' environment}}%
 \endlist}
\renewcommand{\@biblabel}[1]{#1.}
\makeatother \hfuzz=0.5pt \tolerance=500

\begin{document}
\begin{center}
\textbf{Diagonal recovery from noisy Fourier--Hermite coefficients:\\
exact orders of the error}
\end{center}
\ifsubmission\else
\begin{center}
\small Working draft, edition \PaperRevision, \PaperRevisionDate. Not for distribution.
\end{center}
\fi
\PaperMeta

\vspace*{10mm} \centerline{\textsc {S.\,G.\,Solodky $\!\!{}^{\dag}$, Y.\,A.\,Volynets $\!\!{}^{\ddag}$}}

\vspace*{5mm}
\centerline{$\!\!{}^{\dag}\!\!$ Institute of Mathematics, National Academy of Sciences of Ukraine, Kyiv}
\centerline{$\!\!{}^{\ddag}\!\!$ National University of Kyiv-Mohyla Academy, Kyiv, Ukraine}

\vspace*{8mm}
\begin{abstract}
\begin{center}
\parbox{\dimexpr\textwidth-24mm}{\small
We consider the recovery of functions of the weighted Wiener classes $W_s^\mu$ from
perturbed Fourier--Hermite coefficients in the metric of ${\cal L}_\infty(\mathbb R)$.
For the summation methods of the class $\mathcal S^\theta$ we find error estimates
on $W_s^\mu$.
The estimates take into account the behaviour of the Hermite functions both in the oscillatory
region and near the turning point: the maxima of functions with different indices are attained
at different points, and the supremum of a sum over a block of indices is substantially smaller
than the sum of the suprema. Owing to this, in particular,
we establish a common lower bound for all finite
diagonal methods; for $s\ne4/3$ and $p\ne4/3$ it matches the upper bound and
gives the exact order of the error in this class, while at the critical points the gap between
the bounds does not exceed $(\log(\mathrm e/\delta))^{1/4}$ up to a constant.
If, in addition, $s>4/3$ or $p<4/3$,
then this order is also optimal among all recovery operators that use
the whole sequence of data.
}
\end{center}
\end{abstract}

\vspace*{3mm}
\noindent{\textit{Key words:}
Hermite functions, noisy coefficients, diagonal methods, stable summation,
uniform recovery}
\vspace*{3mm}

\noindent\textit{2020 Mathematics Subject Classification:} Primary: 42C10; Secondary: 41A46, 65J20.

\vspace*{6mm}

\section{Introduction}

Let a function be given by its Fourier--Hermite coefficients whose values are known with an
error in the $\ell_p$ norm. In this situation it is natural to use algorithms that use a
finite set of such values and multiply each measured coefficient by its own factor.
Such algorithms are called finite diagonal methods, that is, methods that multiply the used
coefficients by arbitrary real factors (cf.~the diagonal linear shrinkers in
\cite[\S\,4.1]{DonohoJohnstone1994}, where the factors are taken from $[0,1]$, and the
coordinatewise linear estimators in \cite[\S\,3]{DonohoLiuMacGibbon1990}). Unlike these works, where
the noise is random and the error is averaged over the noise, here the noise is deterministic: only a
bound on its $\ell_p$ norm is known, and the error is estimated in the worst case both over the
function in the class and over the noise. In this
paper we answer the following question about this class of algorithms: what is the exact order (on the
power scale) of the best guaranteed error if arbitrary finite sets of indices and arbitrary
real factors are allowed?

Optimal recovery of functions and their derivatives from Fourier coefficients given with an
error was studied earlier in \cite{Osipenko1996,MagarilOsipenko2002}. For the trigonometric and
Chebyshev systems, estimates of the minimax error and of the information complexity of numerical
differentiation and summation under deterministic perturbation of the coefficients, order-sharp
on the power scale, were obtained in \cite{SolodkyStasyuk2023,SemenovaSolodky2026}; for
the Laguerre system, in the summation problem in \cite{SolodkyVolynets2026} and in numerical
differentiation in \cite{SolodkyVolynetsDiff}; for
the Legendre system, error estimates of the truncation method in numerical differentiation
were obtained in \cite{Kyselov}.
Recovery of a function from a finite set of Fourier--Hermite coefficients with random
Gaussian noise was considered in \cite{AmatoDeFeis}: there Tikhonov regularization is used,
with the parameter chosen by the GCV criterion, the function is taken from a weighted Sobolev
space with the weight $e^{-x^2}$, and what is estimated is the expectation of the squared deviation
of the regularized solution from a partial sum of the Fourier--Hermite series in the space $L_2$
with the same weight.
Exact errors and optimal methods were found for the recovery of the identity
\cite{BabenkoParfinovychSkorokhodov2021} and diagonal
\cite{BabenkoParfinovychSkorokhodov2024} operators on weighted classes of sequences,
including recovery from the first coordinates known with an error in $\ell_p$; as a
rule, truncation methods with factors turn out to be optimal. The class and the error
are given there in the spaces $\ell_q$, and the results for classes of functions (for example, for fractional
derivatives of periodic functions from their Fourier coefficients) are obtained via the isometry
of the space $L_2$ with $\ell_2$. In our problem the error
is measured in the uniform norm of a function on the whole line, and its order is determined by the
behaviour of the sums of $|h_k(x)|^q$ over $k$, and not by a single weight of the sequence.

Here a feature of the Hermite system shows itself. The maximum of the function $h_k$ is attained near the
point $x_k\approx\sqrt{2k+1}$, so that functions with different indices have their peaks at different
points, and the supremum of a sum over a block of indices is substantially smaller than the sum of the suprema. For example,
for $N$ functions with indices from the interval $[N,3N]$ the sum of their maxima has order
$N^{11/12}$, while the largest, over the signs $\varepsilon_k=\pm1$, value of the norm $\|\sum\varepsilon_k h_k\|_{\infty}$
has order $N^{3/4}$ (Remark~\ref{rem:technique}). The standard estimation scheme, in
which each term is first bounded by the maximum of its function and only then
summed, loses this difference and in some cases gives an exponent worse than the exact
$e_{\mathrm{tr}}$ (the conditions are given in the same remark); for the Laguerre functions the same scheme gave the exact order
\cite{SolodkyVolynets2026}. We apply H\"older's inequality at a fixed $x$ to the
whole block of indices and take the supremum over $x$ after the summation; the resulting
sums are estimated by Theorem~\ref{sum:kernel} (order-sharp) and Lemma~\ref{lem:blockest}.

The proof consists of three parts. First, from the known uniform estimate of the Hermite
functions \cite{Muckenhoupt1970} we derive the upper bound for the power kernel and the block estimate;
the lower bound for the kernel follows from the values $h_{2r}(0)$. Next we
estimate all three terms of the error of a method of $\mathcal S^\theta$ and choose the truncation level
at which these estimates are balanced. Finally, a two-point estimate with a sum of Hermite functions
over indices from the interval $[N,3N]$ (for $s\ge4/3$ and $p>4/3$) or with a single function $h_0$ (for $p\le4/3$) gives
a lower bound for any recovery operator. For $s<4/3<p$ the lower bound is proved
for any finite diagonal method with any set of used indices. To this end
we consider the factors $\nu_k$ with indices from the interval $[M,3M]$, whose length depends
on $\delta$; unused indices correspond to $\nu_k=0$. If at least $M$ of these factors
satisfy $|1-\nu_k|\ge1/2$, then the method noticeably distorts the coefficients of a suitable function of the
class already for exact data. Otherwise at least $M$ factors satisfy $|\nu_k|>1/2$, and
the data error at these indices passes into the result.

The paper is organized as follows. Section 2 contains auxiliary facts and notions.
The main results are stated in Section~\ref{main-results}.
Theorem~\ref{thm:diag} determines the exact order in the whole class of finite
diagonal methods and establishes that for $s>4/3$ or $p<4/3$ this order is
also optimal among all recovery operators, even those using the whole
sequence of data. The exact orders refer to the noncritical values
$s,p\ne4/3$; the lower bounds also hold at the critical points, where the gap does not exceed
a logarithmic factor up to a constant.
For the region $s<4/3<p$ the class of finite diagonal methods is studied here.

\smallskip
\noindent\textit{Terminology.} Below we use several descriptive but not
generally accepted names. The quantity \eqref{sum:law}, that is, the supremum over $x$ of the sum of $|h_k(x)|^q$
over indices $k\le N$, is called the \emph{power kernel}, and the estimate \eqref{eq:blockkernel} of the
same sum over a dyadic block of indices $M\le k<2M$ is called the \emph{block estimate}. By a
\emph{critical point} we mean the value $p=4/3$ (or $s=4/3$) at which a logarithmic factor
appears in the estimate. The standard lower bound for the error will be called the
\emph{two-point estimate}: if the functions $g$ and $-g$ of the class, with admissible errors,
correspond to the same data, then the error of any recovery operator is at least
$\|g\|_\infty$ (Section~\ref{diagonal}).

\section{Preliminaries}\label{prelim}

\subsection{Hermite polynomials and functions}

Denote by ${\cal L}_2(\mathbb R)$ the space of Lebesgue measurable functions $f$ on
$\mathbb R$ with finite norm $\|f\|_2=\bigl(\int_{\mathbb R}|f(x)|^2\,dx\bigr)^{1/2}$, and by
${\cal L}_\infty(\mathbb R)$ the space of essentially bounded measurable functions with the
norm $\|f\|_\infty=\inf\{M\ge0:\ |f(x)|\le M\ \text{almost everywhere on}\ \mathbb R\}$; functions that coincide almost
everywhere are identified.

By $H_n$ and $h_n$ we mean the Hermite polynomials and functions:
\begin{equation}\label{herm}
H_n(x)=(-1)^n e^{x^2}\frac{d^n}{dx^n}e^{-x^2},
\qquad
h_n(x)=\frac{H_n(x)e^{-x^2/2}}{(2^n n!\sqrt\pi)^{1/2}},
\qquad n\in\mathbb N_0 .
\end{equation}
The system $\{h_n \}_{n\ge0}$ is orthonormal and complete in ${\cal L}_2(\mathbb R)$.
The values at zero for even indices are
\begin{equation}\label{zero}
h_{2r}(0)=(-1)^r \pi^{-1/4}
\left(\binom{2r}{r}4^{-r}\right)^{1/2},
\qquad h_{2r+1}(0)=0 ,
\end{equation}
whence, by the Wallis formula,
\begin{equation}\label{wallis}
\pi^{-1/2}\left(r+\tfrac12\right)^{-1/4}
\le |h_{2r}(0)|
\le \pi^{-1/2}r^{-1/4},
\qquad r\ge1 .
\end{equation}
Formula \eqref{herm}, the orthonormality and completeness of the system, and the values \eqref{zero} can be found in
\cite[\S\S\,5.5, 5.7]{Szego}. The estimates of the Hermite functions used below are given
with references in Lemmas \ref{sum:point} and \ref{lem:block}.

For $f\in {\cal L}_2(\mathbb R)$ we set
$$
a_k(f)=\int_{\mathbb R} f(x)h_k(x)\,dx ,
\qquad
\mathbf f=\left(a_k(f)\right)_{k\in\mathbb N_0} ,
$$
and represent the function by its Fourier--Hermite series
\begin{equation}\label{function}
f(x)=\sum_{k=0}^\infty a_k(f)h_k(x) .
\end{equation}
The approximation is sought in the metric of the space ${\cal L}_\infty(\mathbb R)$; for a continuous function
$g$ its norm is
\begin{equation}\label{metric}
\|g\|_\infty=\sup_{x\in\mathbb R}|g(x)| .
\end{equation}
For the expansion \eqref{function} this metric is the weighted uniform metric with the weight natural for the
Hermite polynomials: if $\widetilde H_k$ are the orthonormal Hermite polynomials, then
$\|g\|_\infty=\sup_{x}\bigl|e^{-x^2/2}\sum_k a_k(g)\widetilde H_k(x)\bigr|$. Conditions under
which the series \eqref{function} converges absolutely and uniformly and its sum is continuous are
established in Section \ref{accuracy}; everywhere below where the norm \eqref{metric} is written,
these conditions are satisfied, and equality \eqref{function} is understood pointwise.

\subsection{Function classes and exponents}

Let $1\le s\le\infty$ and $\mu>0$. We introduce the function space
\begin{equation}\label{norm}
W_s^\mu=\Bigl\{f\in {\cal L}_2(\mathbb R):\
\|f\|_{s,\mu}=\Bigl(\sum_{k=0}^\infty \underline{k}^{\,s\mu}|a_k(f)|^s \Bigr)^{1/s}<\infty
\Bigr\},
\qquad 1\le s<\infty ,
\end{equation}
$$
W_\infty^\mu=\Bigl\{f\in {\cal L}_2(\mathbb R):\
\|f\|_{\infty,\mu}=\sup_{k\in\mathbb N_0}\underline{k}^{\,\mu}|a_k(f)|<\infty\Bigr\},
\qquad \underline{k}=\max\{1,k\} .
$$
As in \cite{SolodkyVolynets2026}, the space and its unit ball are denoted by the same
symbol: $W_s^\mu=\{f\in {\cal L}_2(\mathbb R):\|f\|_{s,\mu}\le1\}$ is called a function
class, and what exactly is meant is clear from the context. These classes are built in the same way as the
weighted Wiener classes of periodic functions defined by the Fourier coefficients of the
trigonometric system (see, for example, \cite{Kolom2023}); here the Fourier--Hermite coefficients
are used instead.

The exponent conjugate to $s$ is written as $s/(s-1)$, and similarly for $p$. Everywhere below
\begin{equation}\label{exps}
a(s)=\frac34-\frac1s,
\quad
b(s)=\frac14-\frac1{3s},
\quad
\bar a(s)=\max\{a(s),b(s)\},
\quad
a_+(p)=\max\Bigl\{\frac34-\frac1p,\,0\Bigr\} .
\end{equation}
Since $b(s)=a(s)/3$, we have $\bar a(s)=a(s)>0$ for $s>4/3$, $\bar a(s)=b(s)<0$ for
$s<4/3$ and $\bar a(4/3)=0$; $a_+(p)=a(p)$ for $p>4/3$ and $a_+(p)=0$ for $p\le4/3$.
The logarithmic factor $L_s(N)$ equals $(\log(2+N))^{1/4}$ for $s=4/3$ and one for
$s\ne4/3$; likewise $L_p(N)$. The symbols $c$ and $C$ denote positive constants depending only
on the parameters named in the statement containing them, and never on $N$,
$\delta$, $x$, $f$, the error sequence or the set of indices; their value
may change from line to line.

\subsection{Inexact data and the class of methods $\mathcal S^\theta$}

It is assumed that instead of the exact coefficients $\mathbf f$ the data
$$
\mathbf f^\delta=\mathbf f+\bxi,
\qquad
\mathbf f^\delta=\left(f_k^\delta\right)_{k\in\mathbb N_0} ,
$$
are available, where the error sequence $\bxi=(\xi_k)_{k\in\mathbb N_0}$ consists of
real numbers and is not assumed to be the coefficient sequence of any
function. The error is measured in the $\ell_p$ norm, where
$\|\bxi\|_{\ell_p}=\bigl(\sum_{k\ge0}|\xi_k|^p\bigr)^{1/p}$ for $p<\infty$ and
$\|\bxi\|_{\ell_\infty}=\sup_k|\xi_k|$: for some $1\le p\le\infty$
\begin{equation}\label{perturbation}
\|\bxi\|_{\ell_p}\le\delta,
\qquad 0<\delta<1 .
\end{equation}
All functions, coefficients, errors and factors below are real.

We consider the summation methods
\begin{equation}\label{ModVer}
S_N^\nu\mathbf f^\delta(x)=\sum_{k=0}^N \nu_k^N f_k^\delta h_k(x)
\end{equation}
with triangular arrays $\nu=\{\nu_k^N \}$, $k=0,\ldots,N$, $N\in\mathbb N$. Such methods
are called $\theta$-methods, see, for example, \cite{MathePereverzev2002}. The quality of method
\eqref{ModVer} depends on the truncation level $N$ and on the properties of the array $\nu$.
We assume that for some $C(\nu)>0$ and $\theta>0$
\begin{equation}\label{qual}
|1-\nu_k^N|\le C(\nu)\left(\frac kN\right)^\theta,
\qquad 0^\theta=0,
\qquad k=0,1,\ldots,N .
\end{equation}
Denote by $\mathcal S^\theta$ the class of all methods \eqref{ModVer} satisfying
\eqref{qual}; as a consequence, the array $\nu$ is uniformly bounded. A method of the class
$\mathcal S^\theta$ is everywhere below assumed to be fixed in advance, and the dependence of constants
on the method means the dependence on $\theta$ and $C(\nu)$. Uniformity over the whole class
$\mathcal S^\theta$, on which $C(\nu)$ is unbounded, is not claimed. Condition \eqref{qual}
depends only on the factors, so the examples of such methods (Fej\'er, Abel--Poisson,
Gauss--Weierstrass, Zygmund, Fourier, de la Vall\'ee Poussin) are the same as in
\cite[Example~1.1]{SolodkyVolynets2026}.

A summation method is applied to a function through its coefficient sequence: for
$f\in {\cal L}_2(\mathbb R)$ we write $S_N^\nu f=S_N^\nu(\mathbf f)$, and
\begin{equation}\label{m_F}
S_N f(x)=\sum_{k=0}^N a_k(f)h_k(x)
\end{equation}
is the Fourier summation method, that is, \eqref{ModVer} with $\nu_k^N=1$.

\subsection{Exponents of the diagonal problem}

Everywhere below, along with \eqref{exps}, we use the quantities
\begin{equation}\label{beta}
\beta=\mu-\bar a(s),
\qquad
\rho=\beta+a_+(p),
\qquad
e_{\mathrm{tr}}=\frac{\beta}{\rho} .
\end{equation}
For $\mu>\max\{0,\bar a(s)\}$ both quantities $\beta$ and $\rho$ are positive,
and $e_{\mathrm{tr}}\in(0,1]$; for $p<4/3$ we have $a_+(p)=0$, $\rho=\beta$ and $e_{\mathrm{tr}}=1$.
This exponent describes
the accuracy of finite diagonal methods and, in the regions indicated below, the general minimax accuracy. At the critical values
$s=4/3$ or $p=4/3$ the upper bounds below contain the logarithmic factors $L_s$ and
$L_p$ defined after \eqref{exps}. The base of natural logarithms is denoted by $\mathrm e$, so that
$\log(\mathrm e/\delta)=1+\log(1/\delta)\ge1$ for $0<\delta<1$.

\section{Main results}\label{main-results}

For a finite set $\Omega\subset\mathbb N_0$ denote by $P_\Omega$ the coordinate
projection of a sequence, and by $G_\Omega f=P_\Omega\mathbf f$ the available exact
coefficients. For real factors $\nu=(\nu_k)_{k\in\Omega}$ the diagonal method
\begin{equation}\label{eq:diagmethod}
\psi_\nu(\mathbf y)=\sum_{k\in\Omega}\nu_k y_k h_k
\end{equation}
is defined on every sequence and takes values in $C(\mathbb R)$ as a finite
sum of Hermite functions. Its error is
\begin{equation}\label{eq:localerror}
e_\delta(W_s^\mu,\psi_\nu G_\Omega,C(\mathbb R),\ell_p)
=\sup_{f\in W_s^\mu}\ \sup_{\substack{\bxi\in\ell_p\\\|\bxi\|_{\ell_p}\le\delta}}
\bigl\|f-\psi_\nu(G_\Omega f+P_\Omega\bxi)\bigr\|_\infty .
\end{equation}
Method \eqref{ModVer} of the class $\mathcal S^\theta$ at the truncation level $L$ is a special case of
\eqref{eq:diagmethod}: $\Omega=\{0,1,\ldots,L\}$ and $\nu_k=\nu_k^L$.

Denote by $Gf=\mathbf f$ the full set of Fourier--Hermite coefficients, and by
$C(\mathbb R)$ the set of real continuous functions on $\mathbb R$. A {\it recovery
operator} is an arbitrary, not necessarily linear, map
$\psi:\ell_{\max\{p,2\}}\to C(\mathbb R)$; its domain contains all data
$Gf+\bxi$, since $Gf\in\ell_2$ and $\bxi\in\ell_p$. The operator $\psi$ recovers the
function $f$ from the data $Gf+\bxi$ in the form $\psi(Gf+\bxi)$. The class of all such operators is denoted by
$$
\Psi=\bigl\{\psi G:\ \psi\colon\ell_{\max\{p,2\}}\to C(\mathbb R)\bigr\};
$$
the notation $\psi G$ keeps the operator $\psi$ itself on the whole data space,
since for $p>2$ the coincidence of operators on exact data does not determine their behaviour on
error sequences outside $\ell_2$.
We set
\begin{equation}\label{eq:general-error}
e_\delta(W_s^\mu,\psi G,C(\mathbb R),\ell_p)
=\sup_{f\in W_s^\mu}\sup_{\substack{\bxi\in\ell_p\\\|\bxi\|_{\ell_p}\le\delta}}
\|f-\psi(Gf+\bxi)\|_\infty,
\end{equation}
\begin{equation}\label{eq:general-minimax}
\mathcal E_\delta(W_s^\mu,\Psi,C(\mathbb R),\ell_p)
=\inf_{\psi G\in\Psi}e_\delta(W_s^\mu,\psi G,C(\mathbb R),\ell_p).
\end{equation}
Thus the comparison includes methods that use the whole sequence of
data; neither linearity nor the diagonal form of the operator is assumed.

\begin{theorem}\label{thm:diag}
Let $1\le s,p\le\infty$, $\mu>\max\{0,\bar a(s)\}$ and $0<\delta<1$.
\begin{itemize}
\item[\rm(i)] For every finite nonempty set $\Omega\subset\mathbb N_0$ and every
real sequence $\nu=(\nu_k)_{k\in\Omega}$
\begin{equation}\label{eq:diaglow}
e_\delta(W_s^\mu,\psi_\nu G_\Omega,C(\mathbb R),\ell_p)\ge c\,\delta^{e_{\mathrm{tr}}} ,
\end{equation}
where $c=c(s,p,\mu)>0$ depends neither on $\Omega$, nor on $\nu$, nor on $\card(\Omega)$.
\item[\rm(ii)] If $s\ge4/3$ or $p\le4/3$, then
$\mathcal E_\delta(W_s^\mu,\Psi,C(\mathbb R),\ell_p)\ge c\,\delta^{e_{\mathrm{tr}}}$.
If, moreover, $s\ne4/3$ and $p\ne4/3$, then
\begin{equation}\label{eq:regular-minimax}
c\,\delta^{e_{\mathrm{tr}}}
\le\mathcal E_\delta(W_s^\mu,\Psi,C(\mathbb R),\ell_p)
\le C\,\delta^{e_{\mathrm{tr}}},
\end{equation}
and for $s=4/3$ or $p=4/3$ the upper bound has the form
$\mathcal E_\delta(W_s^\mu,\Psi,C(\mathbb R),\ell_p)\le C\,\delta^{e_{\mathrm{tr}}}
\bigl(\log(\mathrm e/\delta)\bigr)^{1/4}$. Here $c,C>0$ depend only on $s,p,\mu$.
\item[\rm(iii)] Every fixed method of the class $\mathcal S^\theta$ with $\theta>\beta$ at the level
$N_*=\lceil\delta^{-1/\rho}\rceil$ has error \eqref{eq:localerror} not exceeding
$C\delta^{e_{\mathrm{tr}}}$ for $s\ne4/3$, $p\ne4/3$ and not exceeding
$C\delta^{e_{\mathrm{tr}}}(\log(\mathrm e/\delta))^{1/4}$ for $s=4/3$ or $p=4/3$, where $C$
depends also on $\theta$ and $C(\nu)$.
\end{itemize}
\end{theorem}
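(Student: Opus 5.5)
We prove (iii) first, since it gives every upper bound in (ii): a method of $\mathcal S^\theta$ at level $N$ is an element of $\Psi$. Write the error of $S_N^\nu$ at a fixed $x$ as three terms,
\[
f-S_N^\nu(\mathbf f+\bxi)=\sum_{k>N}a_k h_k+\sum_{k\le N}(1-\nu_k^N)a_k h_k-\sum_{k\le N}\nu_k^N\xi_k h_k .
\]
For the \emph{tail} we split $k>N$ into dyadic blocks $[M,2M)$ with $M=2^jN$. On each block we apply H\"older's inequality at fixed $x$ with exponents $s$ and $s'=s/(s-1)$, bound $|a_k|\le \underline{k}^{-\mu}\times(\text{block }\ell_s\text{ norm})$, and estimate $\sup_x\sum_{M\le k<2M}|h_k(x)|^{s'}$ by the block estimate \eqref{eq:blockkernel} (Lemma~\ref{lem:blockest}). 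This should give a block contribution of order $M^{-\mu+\bar a(s)}$, up to a logarithmic factor at $s=4/3$. Summing the geometric series over $j$ yields $CN^{-\beta}L_s(N)$, because $\beta>0$.

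The \emph{bias} term is handled by the same dyadic splitting over $k\le N$, now with the additional factor $(k/N)^\theta$ coming from \eqref{qual}. The resulting sum over blocks $M\le N$ behaves like $N^{-\theta}\sum M^{\theta-\beta}$, and since $\theta>\beta$ it is dominated by $M\sim N$, giving again $CN^{-\beta}L_s(N)$. For the \emph{noise} term we apply H\"older in $\ell_p$ at fixed $x$, using that the $\nu_k^N$ are uniformly bounded, and then the power kernel bound of Theorem~\ref{sum:kernel} with $q=p'$. This gives $C\delta N^{a_+(p)}L_p(N)$; for $p<4/3$ the kernel is bounded, since $\sup_x\sum_k |h_k(x)|^{q}<\infty$ for $q>4$. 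With $N_*=\lceil\delta^{-1/\rho}\rceil$ the bounds $N^{-\beta}$ and $\delta N^{a_+(p)}$ coincide and equal $\delta^{\beta/\rho}$. The factor $L(N_*)\sim(\log(\mathrm e/\delta))^{1/4}$ appears only in the critical cases. The same computation shows absolute and uniform convergence of \eqref{function}, as Section~\ref{accuracy} promises.

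For the lower bounds we use the two-point estimate: if $g\in W_s^\mu$ and $Gg$ (or $P_\Omega Gg$) has $\ell_p$ norm at most $\delta$, then $\xi=\mp Gg$ makes $g$ and $-g$ produce identical data. Hence any operator errs by at least $\|g\|_\infty$.
\begin{itemize}
\item[(a)] If $p\le4/3$, take $g=c\delta h_0$; then $e_{\mathrm{tr}}=1$ and the bound is $\ge c\delta$.
\item[(b)] If $s\ge4/3$ and $p>4/3$, take $g=\lambda\sum_{N\le k\le3N}\varepsilon_k h_k$, with signs $\varepsilon_k$ chosen so that $\|\sum\varepsilon_kh_k\|_\infty\ge cN^{3/4}$ (Remark~\ref{rem:technique}; alternatively, take the even indices evaluated at $0$ and use \eqref{wallis}). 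Membership in the class requires $\lambda\le N^{-\mu-1/s}$, and the data constraint requires $\lambda N^{1/p}\le\delta$. Balancing gives $\|g\|_\infty\sim\lambda N^{3/4}$. The resulting exponents match $\beta=\mu-a(s)$ and $a(p)$, and taking $N\sim\delta^{-1/\rho}$ produces $\delta^{e_{\mathrm{tr}}}$. At $s=4/3$ this still gives the lower bound with $\bar a=0$.
\end{itemize}
This settles (ii), and it settles (i) in these regions, because finite diagonal methods belong to $\Psi$ (they depend on the data only through $P_\Omega$).

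The main obstacle is part (i) in the region $s<4/3<p$. There the natural single-block test function has $\|g\|_\infty\sim\lambda N^{3/4}$, but the class constraint $\lambda\le N^{-\mu-1/s}$ gives the wrong exponent: $\beta$ involves $b(s)$, not $a(s)$. We therefore argue as sketched in the Introduction. Fix $M\sim\delta^{-1/\rho}$ and look at the factors $\nu_k$ for even $k\in[M,3M]$, with $\nu_k=0$ for $k\notin\Omega$. Either at least $cM$ of them satisfy $|1-\nu_k|\ge1/2$, or at least $cM$ satisfy $|\nu_k|>1/2$. In the second case we take the noise $\xi_k=\delta|J|^{-1/p}\operatorname{sign}(\nu_kh_k(0))$ on that index set $J$, with $f=0$. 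Evaluating at $x=0$ and using \eqref{wallis} gives an error $\ge c\delta M^{1-1/p-1/4}=c\delta M^{a(p)}$.

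In the first case we take $f$ with coefficients $a_k=\lambda\operatorname{sign}((1-\nu_k)h_k(0))$ on $J$ and exact data. Evaluating at $0$ gives an error $\ge c\lambda M^{3/4}$. The class constraint now reads $\lambda\le M^{-\mu-1/s}$, which gives $M^{-\mu+3/4-1/s}=M^{-\mu+a(s)}$. Since $a(s)<b(s)<0$, this is too weak. The remedy I would try first is to localize at the turning point of a single index: a block of length $\sim M^{1/3}$ near $k\sim M$, evaluated at $x\approx\sqrt{2M}$, where each $|h_k|\sim M^{-1/12}$ has a constant sign pattern. This gives $\lambda M^{1/3}M^{-1/12}$ with $\lambda=M^{-\mu-1/(3s)}$, i.e.\ the exponent $-\mu+1/4-1/(3s)=-\mu+b(s)$, exactly as required. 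The pigeonhole should then be run over such short sub-blocks, with the noise case using $x=0$ or the same turning-point sums. Checking that both alternatives balance at $M\sim\delta^{-1/\rho}$, uniformly in $\Omega$ and $\nu$, is the delicate step.
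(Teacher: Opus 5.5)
Your proof of (iii) and of the upper bounds in (ii) is correct and is the paper's argument: the three-term split, H\"older at fixed $x$ on dyadic blocks with Lemma~\ref{lem:blockest}, the power kernel with $q=p'$, and the level $N_*$. The same holds for your two-point lower bounds when $s\ge4/3$ or $p\le4/3$, and for the reduction of (i) to (ii) in that region. Using even indices at $x=0$ with \eqref{wallis}, instead of the oscillatory point $x_0=\kappa_0N^{-1/2}$ of Lemma~\ref{lem:block}\,(i), is a legitimate simplification. In (ii) you choose the index set yourself. In the noise alternative of (i) you restrict the dichotomy to even indices from the start, whereas the paper's set $\mathcal K$ may consist mostly of odd indices, which vanish at $0$.

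The genuine gap is (i) for $s<4/3<p$, which you leave at ``the remedy I would try first''. Of the two options you list for the noise case, the turning-point one fails. If the dichotomy is run inside a sub-block of length $\asymp M^{1/3}$, the noise alternative has only $\asymp M^{1/3}$ indices. It then gives at most $\delta(M^{1/3})^{1-1/p}M^{-1/12}=\delta M^{b(p)}$ at a turning point, and less at $x=0$, and $b(p)=a(p)/3<a(p)$ for $p>4/3$. The dichotomy must therefore be run once over all $\asymp M$ even indices of $[M,3M]$, with the noise alternative evaluated at $x=0$ on $\gtrsim M$ indices. The short blocks are used only in the bias alternative.

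In the bias alternative two ingredients are missing.
\begin{itemize}
\item[(1)] The $\ge cM$ indices with $|1-\nu_k|\ge1/2$ form an arbitrary set, so you need a pigeonhole step. Cut $[M,3M]$ into at most $3M/J$ consecutive intervals of $J=\lfloor\tau M^{1/3}\rfloor$ integers. One of them contains a set $W$ of at least $cJ\ge\kappa M^{1/3}$ such indices.
\item[(2)] You need $|h_k(x_W)|\ge c_1M^{-1/12}$ at a single point, for all $k\in W$. This does not follow from Lemma~\ref{sum:point}, whose lower bound concerns $\max_x|h_k|$, attained at different points for different $k$. It requires the Airy asymptotics of Szeg\"{o}'s Theorem~8.22.9, $h_k(x)=\gamma_{\mathcal A}k^{-1/12}\{\mathcal A(t)+O(k^{-2/3})\}$, evaluated at $x_W=\lambda_{k_*}^{1/2}$ with $k_*=\min W$. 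Choose $\tau$ so small that $0\le t_k=2^{1/2}3^{1/3}k^{1/6}(\lambda_k^{1/2}-\lambda_{k_*}^{1/2})<t_0$, where $\mathcal A\ge\mathcal A(0)/2$ on $[0,t_0]$. Take $M\ge N_0$ so that the remainder is small. This is Lemma~\ref{lem:block}\,(ii).
\end{itemize}

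With these ingredients, take exact data and the coefficients $a_k=(3M)^{-\mu}\card(W)^{-1/s}\operatorname{sign}\bigl((1-\nu_k)h_k(x_W)\bigr)$ on $W$. The error is then at least $cM^{-\mu}\card(W)^{1-1/s}M^{-1/12}\ge cM^{-\beta}$. The balancing you call delicate is immediate, because $M=\lceil\delta^{-1/\rho}\rceil$ is fixed before $\Omega$ and $\nu$: $M^{-\beta}\ge2^{-\beta}\delta^{e_{\mathrm{tr}}}$ and $\delta M^{a(p)}\ge\delta^{e_{\mathrm{tr}}}$. The range $M<N_0$ is handled by the monotonicity of the error in $\delta$.
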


Thus, for $s\ne4/3$ and $p\ne4/3$ the quantity $\delta^{e_{\mathrm{tr}}}$ is the exact order
of the error in the class of finite diagonal methods, and for $s>4/3$ or $p<4/3$ also among all
recovery operators. The lower bound (i) holds for all $s,p$, and the lower bound for
$\mathcal E_\delta$ holds for $s\ge4/3$ or $p\le4/3$, including all critical
points, where the gap between the bounds does not exceed $(\log(\mathrm e/\delta))^{1/4}$ up
to a constant factor.
In the region $s<4/3<p$ no lower bound for $\mathcal E_\delta$ is established.
The proof is given in Section~\ref{diagonal}; the upper bound together with the critical
logarithmic factors is Theorem~\ref{thm:accuracy}.

\section{Analytic estimates and the accuracy of the methods $\mathcal S^\theta$}\label{accuracy}

\subsection{Pointwise estimate, power kernel and block estimate}

Throughout this section $\lambda_k=2k+1$.

\begin{lemma}[pointwise estimates]\label{sum:point}
There exist absolute constants $C\ge c>0$ with the following properties.
\begin{itemize}
\item[\rm(i)] For all $x\in\mathbb R$ and all $k\ge0$
\begin{equation}\label{sum:env}
|h_k(x)|\le C\bigl(\lambda_k^{1/3}+|x^2-\lambda_k|\bigr)^{-1/4},
\end{equation}
and in particular
\begin{equation}\label{sum:envmax}
|h_k(x)|\le C\,(k+1)^{-1/12} .
\end{equation}
\item[\rm(ii)] For all $k\ge0$
\begin{equation}\label{sum:maxlow}
\max_{x\in\mathbb R}|h_k(x)|\ge c\,(k+1)^{-1/12} .
\end{equation}
\end{itemize}
\end{lemma}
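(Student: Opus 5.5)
Part (i) is a classical result, and I will take it from the literature rather than rederive it. The uniform estimate of Muckenhoupt \cite{Muckenhoupt1970} (see also the Plancherel--Rotach asymptotics in \cite[\S\,8.22]{Szego}) states that, in the notation $\lambda_k=2k+1$,
$$
|h_k(x)|\le C\begin{cases}(\lambda_k-x^2)^{-1/4}, & x^2\le\lambda_k-\lambda_k^{1/3},\\ \lambda_k^{-1/12}, & |x^2-\lambda_k|\le\lambda_k^{1/3},\\ \lambda_k^{-1/12}\exp\bigl(-c\,(x^2-\lambda_k)^{3/2}\lambda_k^{-1/2}\bigr), & x^2\ge\lambda_k+\lambda_k^{1/3}.\end{cases}
$$
The first plan is to check that each branch is dominated by $C(\lambda_k^{1/3}+|x^2-\lambda_k|)^{-1/4}$.

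\emph{Oscillatory branch.} Here $\lambda_k-x^2\ge\lambda_k^{1/3}$, so
$$
\lambda_k^{1/3}+|x^2-\lambda_k|\le2(\lambda_k-x^2),
$$
and the bound follows.

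\emph{Transition branch.} Here $\lambda_k^{1/3}+|x^2-\lambda_k|\le2\lambda_k^{1/3}$. Hence the envelope is at least $2^{-1/4}\lambda_k^{-1/12}$, and this branch is also covered.

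\emph{Exponentially decaying branch.} Put $t=x^2-\lambda_k\ge\lambda_k^{1/3}$ and $u=t\lambda_k^{-1/3}\ge1$. Then $t^{3/2}\lambda_k^{-1/2}=u^{3/2}$. Also
$$
(\lambda_k^{1/3}+t)^{-1/4}\ge(2t)^{-1/4}=2^{-1/4}\lambda_k^{-1/12}u^{-1/4}.
$$
So it suffices to show $e^{-cu^{3/2}}\le Cu^{-1/4}$ for $u\ge1$, which is obvious. This branch is the only place where a small computation is needed: the exponential factor must beat the polynomial loss $u^{1/4}$, uniformly in $k$. This uniformity is automatic after the rescaling by $u$.

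The bound \eqref{sum:envmax} follows at once from \eqref{sum:env}, since the right-hand side is at most $C\lambda_k^{-1/12}\le C'(k+1)^{-1/12}$. For $k=0$ one can also simply note that $h_0$ is bounded.

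For (ii) I would use a norm argument instead of the asymptotics at the turning point. The function $h_k$ has unit $\mathcal L_2$ norm. By \eqref{sum:env}, its mass outside the interval $|x|\le\sqrt{2\lambda_k}$ is bounded by a fixed fraction, and one could try to combine this with $\|h_k\|_2^2\le\|h_k\|_\infty\|h_k\|_1$. However, this route only yields $\max|h_k|\gtrsim k^{-1/4}$, which is too weak.

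The correct approach is to localize to the transition window $I_k=\{x>0:|x^2-\lambda_k|\le\lambda_k^{1/3}\}$, whose length is of order $\lambda_k^{-1/6}$. It then suffices to show
$$
\int_{I_k}h_k^2\,dx\ge c\,\lambda_k^{-1/3},
$$
because then $\max_{I_k}h_k^2\ge c\lambda_k^{-1/3}/|I_k|\ge c'\lambda_k^{-1/6}$, which is exactly \eqref{sum:maxlow}.

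This lower bound on the local mass is the main obstacle. I would obtain it from the Plancherel--Rotach asymptotics in the transition region, \cite[Thm.~8.22.9(c)]{Szego}:
$$
h_k\bigl(\sqrt{\lambda_k}+2^{-1/2}\lambda_k^{-1/6}t\bigr)=2^{1/2}\lambda_k^{-1/12}\bigl(\operatorname{Ai}(-t)+O(\lambda_k^{-2/3})\bigr)
$$
uniformly for bounded $t$. Choosing $t=t_0$ with $\operatorname{Ai}(-t_0)\ne0$ gives \eqref{sum:maxlow} directly for all large $k$. For the finitely many remaining $k$, the bound holds trivially because each $h_k\not\equiv0$, which fixes the absolute constant $c$.
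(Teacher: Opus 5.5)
Your proof of (i) has the same structure as the paper's: quote the Askey--Wainger/Muckenhoupt bounds, then dominate each region by the envelope. However, the bound you quote for the third region is false, and it fails exactly where the paper's proof does its only computation.

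On the whole half-line $x^2\ge\lambda_k+\lambda_k^{1/3}$ your majorant $\lambda_k^{-1/12}\exp\bigl(-c(x^2-\lambda_k)^{3/2}\lambda_k^{-1/2}\bigr)$ decays like $\exp(-c\,\lambda_k^{-1/2}|x|^3)$ as $|x|\to\infty$, which is faster than any Gaussian. In contrast, $h_k(x)$ behaves like a constant times $|x|^k e^{-x^2/2}$. Already $h_0=\pi^{-1/4}e^{-x^2/2}$ violates your bound for every choice of $c>0$ and $C$. The reason is that the decay exponent $\int_{\sqrt{\lambda_k}}^{x}\sqrt{y^2-\lambda_k}\,dy$ is comparable to $(x^2-\lambda_k)^{3/2}\lambda_k^{-1/2}$ only while $x^2-\lambda_k\lesssim\lambda_k$; for $x^2\gg\lambda_k$ it is about $x^2/2$.

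The correct statement, \cite[(2.3)]{Muckenhoupt1970} as quoted in the paper, has two parts:
\begin{itemize}
\item[(1)] for $x^2\le2\lambda_k$ it gives the envelope bound directly;
\item[(2)] for $x^2>2\lambda_k$ it gives only $|h_k(x)|\le Ce^{-\gamma x^2}$.
\end{itemize}
So your case analysis is redundant inside $x^2\le2\lambda_k$ (your third branch is fine if restricted there), and the region $x^2>2\lambda_k$ is not covered. There you need the paper's step: $\lambda_k^{1/3}+x^2-\lambda_k\le2x^2$ and $e^{-\gamma x^2}\le C_\gamma(2x^2)^{-1/4}$. With this replacement, \eqref{sum:env} and \eqref{sum:envmax} follow as you say.

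For (ii) your route differs slightly from the paper's. The paper cites the asymptotics of the maximum itself, \cite[(8.91.10)]{Szego}. You instead evaluate the turning-point asymptotics \cite[Theorem~8.22.9]{Szego} at a single point, which is the device the paper uses later in Lemma~\ref{lem:block}\,(ii). Your version is slightly more informative, since it exhibits a point near $\sqrt{\lambda_k}$ where the maximal order is attained; the paper's is a one-line citation. The detour through the local $\mathcal L_2$ mass on $I_k$ is not needed, because the pointwise asymptotics already gives \eqref{sum:maxlow}.

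Your form of the Airy formula has the orientation wrong. With $x=\sqrt{\lambda_k}+(\text{const})\,\lambda_k^{-1/6}t$ the profile must decay for $t>0$, so it should be $\mathrm{Ai}(+\text{const}\cdot t)$; Szeg\"{o} writes $x=\lambda_k^{1/2}-2^{-1/2}3^{-1/3}k^{-1/6}t$ with his $\mathcal A(t)$. Your constants are also off. This is harmless if you take $t_0=0$, where the Airy function is positive in every normalization, and then handle the finitely many small $k$ via $h_k\not\equiv0$ as you do.
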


\textit{Proof.} The Hermite polynomials of \cite{Muckenhoupt1970} are defined by a
generating function and coincide with \eqref{herm} by \cite[(5.5.3), (5.5.7)]{Szego}, and the
function $\mathcal H_k$ of \cite[(2.1)]{Muckenhoupt1970} is our $h_k$. By
\cite[(2.3)]{Muckenhoupt1970}, where this estimate is derived from the table
\cite[p.~700]{AskeyWainger1965}, there exist absolute constants $C,\gamma>0$ such that
for all $k\ge0$
$$
|h_k(x)|\le C\bigl(\lambda_k^{1/3}+|x^2-\lambda_k|\bigr)^{-1/4}\ \ (x^2\le2\lambda_k),
\qquad
|h_k(x)|\le Ce^{-\gamma x^2}\ \ (x^2>2\lambda_k).
$$
In the second case $\lambda_k^{1/3}+x^2-\lambda_k\le2x^2$ and $e^{-\gamma x^2}\le C_\gamma(2x^2)^{-1/4}$;
this gives \eqref{sum:env}. Estimate \eqref{sum:envmax} follows from \eqref{sum:env}, since
$\lambda_k^{1/3}\ge(k+1)^{1/3}$. By \cite[(8.91.10), p.~242]{Szego}
$\max_{x\in\mathbb R}e^{-x^2/2}|H_k(x)|\cong(2^{k}k!)^{1/2}c_0k^{-1/12}$ with a constant $c_0>0$;
division by $(2^{k}k!\sqrt\pi)^{1/2}$ gives \eqref{sum:maxlow} for $k\ge k_0$, and the finitely many
remaining indices are absorbed into the constant, since $h_k\not\equiv0$. $\Box$

\begin{theorem}[power kernel]\label{sum:kernel}
Let $1\le q<\infty$ and
$$
K_{N,q}:=\sup_{x\in\mathbb R}\sum_{k=0}^{N}|h_k(x)|^q .
$$
Then
\begin{equation}\label{sum:law}
K_{N,q}\asymp\sum_{j=1}^{N+1}j^{-q/4}
\end{equation}
with constants depending on $q$ and not depending on $N$. Moreover,
\begin{equation}\label{sum:law_cases}
K_{N,q}\asymp
\left\{
\begin{array}{ll}
1,& q>4,\\[1mm]
\log(2+N),& q=4,\\[1mm]
(N+1)^{1-q/4},& 1\le q<4 .
\end{array}
\right.
\end{equation}
\end{theorem}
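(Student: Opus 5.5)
For the lower bound I will evaluate at $x=0$. By \eqref{zero}, $h_{2r+1}(0)=0$, and by \eqref{wallis}, $|h_{2r}(0)|\ge \pi^{-1/2}(r+\tfrac12)^{-1/4}\ge c\,(2r+1)^{-1/4}$ for $r\ge1$; also $|h_0(0)|=\pi^{-1/4}$. Hence
$$
K_{N,q}\ge\sum_{0\le 2r\le N}|h_{2r}(0)|^q\ge c_q\sum_{r=0}^{\lfloor N/2\rfloor}(2r+1)^{-q/4}\ge c_q'\sum_{j=1}^{N+1}j^{-q/4}.
$$
The last inequality compares the sum over odd $j$ with the full sum. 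Since $j^{-q/4}$ is decreasing, $(2r+1)^{-q/4}\ge\frac12\bigl((2r+1)^{-q/4}+(2r+2)^{-q/4}\bigr)$, and at most one term at the end is left unpaired. That term is dominated by the first term.

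For the upper bound I will fix $x$ and put $t=x^2$. Estimate \eqref{sum:env} gives $|h_k(x)|^q\le C(\lambda_k^{1/3}+|t-\lambda_k|)^{-q/4}$, so I will split the indices $k\le N$ into three groups.

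\emph{Near group}, $|\lambda_k-t|\le\lambda_k^{1/3}$. Here each term is at most $C\lambda_k^{-q/12}$. For these $k$ one has $\lambda_k\asymp\max\{t,1\}$, so there are at most $C\max\{t,1\}^{1/3}$ such indices. The group therefore contributes at most $C\,m^{1/3-q/12}$, where $m\asymp\max\{t,1\}$ is restricted to $m\lesssim N+1$. This is of the form $m^{(4-q)/12}$, which is dominated by $\sum_{j\le N+1}j^{-q/4}\asymp (N+1)^{1-q/4}$ when $q<4$ (use $m\lesssim N$ and $(4-q)/12\le 1-q/4$), by $\log$ when $q=4$, and by a constant when $q>4$.

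\emph{Far group}, $|\lambda_k-t|>\lambda_k^{1/3}$. Here each term is at most $C|t-\lambda_k|^{-q/4}$. The points $\lambda_k$ are spaced by $2$. Ordering the indices by their distance to $t$, the $i$-th nearest far index satisfies $|t-\lambda_k|\ge c\,i$ (the near group already removes the closest ones, and in any case distances grow linearly). So this group contributes at most $C\sum_{i=1}^{N+1}i^{-q/4}$, with a factor 2 for the two sides of $t$.

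\emph{Small indices.} I also need that the near-group term of order $1$ when $t\lesssim1$ is covered; it is, since the first term of the sum is $1$.

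Adding the groups gives $K_{N,q}\le C\sum_{j=1}^{N+1}j^{-q/4}$, which proves \eqref{sum:law}. Formula \eqref{sum:law_cases} then follows by the integral test applied to $\sum_{j\le N+1}j^{-q/4}$ in the three cases $q>4$, $q=4$, $q<4$.

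The main obstacle is the near group in the upper bound: one has to check carefully that its count times its maximum stays below the target for all $q$. The cleaner route I would try first is to avoid the split altogether and bound the whole sum by
$$
\sum_k\bigl(1+|t-\lambda_k|\bigr)^{-q/4}\le C\sum_{i\ge1,\ i\le N+1}i^{-q/4}.
$$
This uses only the spacing of the $\lambda_k$ and the weaker bound $\lambda_k^{1/3}\ge1$. Because the distances $|t-\lambda_k|$ over $k\le N$ form, on each side of $t$, a sequence increasing by steps of $2$, this bound holds uniformly in $t$. It already gives the upper bound for every $q$ without the near-group analysis, so I would adopt it as the final argument.
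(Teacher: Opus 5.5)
Your proposal is correct and, in the form you finally adopt, it is the paper's argument. The upper bound uses \eqref{sum:env} with $\lambda_k^{1/3}\ge1$ and the spacing of the $\lambda_k=2k+1$ on each side of $x^2$, and the lower bound evaluates at $x=0$ using the values $h_{2r}(0)$ and the Wallis estimate \eqref{wallis}. The only differences are cosmetic: you include $r=0$ and pair odd with even $j$, which covers all $q$ at once where the paper treats $q>4$ (or $N\le1$) separately via $h_0(0)$, and the near/far split you sketch first is unnecessary, as you yourself note.
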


\textit{Proof.} Fix $x$. The numbers $\lambda_k$ are spaced $2$ apart, so on each
side of $x^2$ the $j$-th closest of them ($j=0,1,\ldots$) is separated from $x^2$ by at least $2j$. Hence, if
the numbers $|x^2-\lambda_k|$, $0\le k\le N$, are arranged in increasing order, then the $i$-th of them
($i=0,1,\ldots,N$) is at least $\lfloor i/2\rfloor$. Since $\lambda_k^{1/3}\ge1$, estimate
\eqref{sum:env} gives
$$
\sum_{k=0}^N|h_k(x)|^q\le C^q\sum_{i=0}^N\bigl(1+\lfloor i/2\rfloor\bigr)^{-q/4}
\le2C^q\sum_{j=1}^{N+1}j^{-q/4}
$$
uniformly in $x$. For the lower bound take $x=0$. For $q>4$ or $N\le1$ it suffices that
$K_{N,q}\ge|h_0(0)|^q=\pi^{-q/4}$. For $1\le q\le4$ and $N\ge2$, by \eqref{zero}--\eqref{wallis}
\begin{equation}\label{sum:low}
\sum_{k=0}^N|h_k(0)|^q
\ge \sum_{1\le r\le N/2}|h_{2r}(0)|^q
\gtrsim_q \sum_{1\le r\le N/2}r^{-q/4}
\asymp_q \sum_{j=1}^{N+1}j^{-q/4} .
\end{equation}
The equivalence of \eqref{sum:law} and \eqref{sum:law_cases} is a comparison of the sum with an integral.
$\Box$

For $q=2$ the upper bound in \eqref{sum:law} occurs in the proof of
\cite[Lemma~4.5]{Thangavelu1989}.

\begin{lemma}[block estimate]\label{lem:blockest}
Let $1\le s\le\infty$ and $q=s/(s-1)$. Then for all $M\in\mathbb N$ and $x\in\mathbb R$
\begin{equation}\label{eq:blockkernel}
\Bigl(\sum_{M\le k<2M}|h_k(x)|^q\Bigr)^{1/q}\le C\,M^{\bar a(s)}L_s(M),
\end{equation}
where for $s=1$ the left-hand side is $\max_{M\le k<2M}|h_k(x)|$, and $C$ depends only on $s$.
\end{lemma}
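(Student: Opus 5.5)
The plan is to repeat the argument of Theorem~\ref{sum:kernel}, now using that inside the block the turning-point term $\lambda_k^{1/3}$ in \eqref{sum:env} is uniformly of size $M^{1/3}$ rather than merely $\ge1$. Fix $x$ and $M$. For $M\le k<2M$ we have $\lambda_k=2k+1\ge M$, so \eqref{sum:env} gives
$$
|h_k(x)|\le C\bigl(M^{1/3}+|x^2-\lambda_k|\bigr)^{-1/4},\qquad M\le k<2M .
$$
As in the proof of Theorem~\ref{sum:kernel}, the numbers $\lambda_k$ are spaced $2$ apart. Hence if the $M$ distances $|x^2-\lambda_k|$, $M\le k<2M$, are arranged in increasing order, the $i$-th of them is at least $\lfloor i/2\rfloor$. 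Consequently, for $1\le q<\infty$,
$$
\sum_{M\le k<2M}|h_k(x)|^q\le 2C^q\sum_{j=0}^{M}\bigl(M^{1/3}+j\bigr)^{-q/4},
$$
uniformly in $x$. The case $s=1$ ($q=\infty$) is handled directly: by \eqref{sum:envmax}, $\max_{M\le k<2M}|h_k(x)|\le CM^{-1/12}$, and $-1/12=b(1)=\bar a(1)$.

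The remaining step is to evaluate the one-dimensional sum $\Sigma:=\sum_{j=0}^M(M^{1/3}+j)^{-q/4}$ by comparison with an integral, in three cases matching the sign of $a(s)$. Recall that $1/q=1-1/s$, so $a(s)=1/q-1/4$ and $b(s)=\frac1{3q}-\frac1{12}$.
\begin{itemize}
\item[$\bullet$] If $q<4$ (that is, $s>4/3$), drop $M^{1/3}$ for $j\ge1$ to get $\Sigma\le CM^{1-q/4}$. Taking the $1/q$-th power gives $M^{1/q-1/4}=M^{a(s)}=M^{\bar a(s)}$.
\item[$\bullet$] If $q>4$ (that is, $s<4/3$), the sum is dominated by its start: $\Sigma\le C\int_0^\infty(M^{1/3}+t)^{-q/4}\,dt+M^{-q/12}\le CM^{(1-q/4)/3}$. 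Its $1/q$-th power is $M^{\frac1{3q}-\frac1{12}}=M^{b(s)}=M^{\bar a(s)}$.
\item[$\bullet$] If $q=4$ (that is, $s=4/3$), then $\Sigma\le C\bigl(1+\log(1+M^{2/3})\bigr)\le C\log(2+M)$. Its fourth root is $C\,L_s(M)$, and here $\bar a(4/3)=0$.
\end{itemize}
All constants depend only on $q$, hence only on $s$, which gives \eqref{eq:blockkernel}.

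The only delicate point is the case $q>4$. There the estimate must exploit the lower bound $\lambda_k^{1/3}\gtrsim M^{1/3}$ for every index of the block, not only for the nearest ones. This is exactly what makes the block estimate better than summing the individual maxima, which would give $M^{1/q-1/12}$. The ordering-of-distances argument carries the floor $M^{1/3}$ through all terms, so I expect no real obstruction beyond keeping track of that floor.
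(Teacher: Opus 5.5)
Your proposal is correct and follows the paper's proof essentially step for step. You use the floor $\lambda_k^{1/3}\ge M^{1/3}$ on the block and the same ordering-of-distances count as in Theorem~\ref{sum:kernel}, leading to $2C^q\sum_{j=0}^{M}(M^{1/3}+j)^{-q/4}$. This is followed by the three-case comparison with an integral, with \eqref{sum:envmax} applied directly for $s=1$.
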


\textit{Proof.} For $s=1$, by \eqref{sum:envmax} we have
$|h_k(x)|\le CM^{-1/12}=CM^{\bar a(1)}$, since $a(1)=-\frac14<b(1)=-\frac1{12}$. Let
$s>1$, that is, $q<\infty$. On the block $\lambda_k^{1/3}\ge M^{1/3}$, and the same count as in the
proof of Theorem \ref{sum:kernel}, together with a comparison with an integral, gives
$$
\sum_{M\le k<2M}|h_k(x)|^q\le2C^q\sum_{j=0}^{M}\bigl(M^{1/3}+j\bigr)^{-q/4}
\le C\left\{
\begin{array}{ll}
M^{1-q/4},& q<4,\\[1mm]
\log(2+M),& q=4,\\[1mm]
M^{(4-q)/12},& q>4 .
\end{array}
\right.
$$
Since $qa(s)=1-q/4$ and $qb(s)=(4-q)/12$, taking the $q$-th root gives
\eqref{eq:blockkernel}: $M^{a(s)}$ for $s>4/3$, $(\log(2+M))^{1/4}$ for $s=4/3$ and
$M^{b(s)}$ for $s<4/3$. $\Box$

\subsection{Error decomposition}

We write the error of methods \eqref{ModVer} in the form
\begin{equation}\label{eq:fullError}
f(x)-S_N^\nu\mathbf f^\delta(x)=\bigl(f(x)-S_N f(x)\bigr)
+\bigl(S_N f(x)-S_N^\nu f(x)\bigr)
+\bigl(S_N^\nu f(x)-S_N^\nu\mathbf f^\delta(x)\bigr) ,
\end{equation}
where $S_N f$ is the Fourier summation method \eqref{m_F}. The truncation level $N$ in
\eqref{eq:fullError} is chosen depending on $\delta$, $p$, $s$ and $\mu$ so as to
balance the terms of the error estimate. The three differences are estimated separately below, and in the first
two the estimate \eqref{eq:blockkernel} is applied to dyadic blocks of indices.

\begin{lemma}\label{lem:tail}
Let $1\le s\le\infty$, $\mu>\max\{0,\bar a(s)\}$ and $f\in W_s^\mu$. Then the series
\eqref{function} converges absolutely and uniformly on $\mathbb R$, its sum is continuous, and for
every $N\ge1$
$$
\|f-S_N f\|_\infty\le c\|f\|_{s,\mu}N^{-(\mu-\bar a(s))}L_s(N) ,
$$
where $c$ depends only on $s$ and $\mu$.
\end{lemma}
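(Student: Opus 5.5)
We split the tail $f-S_Nf=\sum_{k>N}a_k(f)h_k$ into dyadic blocks $B_j=\{k:\ 2^jM_0\le k<2^{j+1}M_0\}$. The first block starts at the largest power of two $M_0\le N+1$, with the indices $k\le N$ removed from it; we then control the blocks one at a time by Hölder's inequality at a fixed $x$ together with the block estimate of Lemma~\ref{lem:blockest}. Fix $x\in\mathbb R$ and a block $M\le k<2M$. Let $q=s/(s-1)$. Hölder's inequality gives
$$
\sum_{M\le k<2M}|a_k(f)h_k(x)|\le\Bigl(\sum_{M\le k<2M}k^{s\mu}|a_k(f)|^s\Bigr)^{1/s}M^{-\mu}\Bigl(\sum_{M\le k<2M}|h_k(x)|^q\Bigr)^{1/q}.
$$
By \eqref{eq:blockkernel} the last factor is at most $CM^{\bar a(s)}L_s(M)$. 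The first factor is at most $\|f\|_{s,\mu}$; for $s=\infty$ it is $\sup_k k^\mu|a_k|$, and for $s=1$ the $h$-factor is a maximum. Hence each block contributes at most
$$
C\|f\|_{s,\mu}M^{-(\mu-\bar a(s))}L_s(M),
$$
and this bound is uniform in $x$. Restricting a block to a subset of its indices (as for the truncated first block) only decreases both sides, so the same bound holds there.

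Next we sum over $M=2^jM_0$, $j\ge0$. Since $\beta=\mu-\bar a(s)>0$ by hypothesis, the factors $M^{-\beta}$ form a geometric series. The logarithmic factor appears only when $s=4/3$. In that case $\bar a=0$ and we have to sum $(2^jM_0)^{-\mu}(\log(2+2^jM_0))^{1/4}$. We use $\log(2+2^jM_0)\le\log(2+M_0)+j\log 2$ and $(u+v)^{1/4}\le u^{1/4}+v^{1/4}$. Because $\sum_j 2^{-j\mu}j^{1/4}<\infty$, the sum is at most $C M_0^{-\mu}L_s(M_0)$. Since $M_0\asymp N$, this gives
$$
\sum_{k>N}|a_k(f)||h_k(x)|\le c\|f\|_{s,\mu}N^{-\beta}L_s(N)
$$
uniformly in $x$. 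Taking the supremum over $x$ yields the stated estimate.

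The same computation with $N=0$, i.e.\ blocks starting at $M=1$ plus the term $k=0$, shows that $\sum_k|a_k(f)|\,\sup_x|h_k(x)|$ restricted to blocks is finite. More precisely, $\sum_k|a_k(f)h_k(x)|$ is bounded uniformly in $x$, and its tails beyond $N$ tend to zero uniformly in $x$. By the Weierstrass-type criterion (uniform Cauchy), the series \eqref{function} therefore converges absolutely and uniformly to a continuous function, since each $h_k$ is continuous. It remains to identify this sum with $f$. The series converges to $f$ in ${\cal L}_2$ by completeness, and a subsequence of the partial sums converges a.e. Hence the uniform limit coincides with $f$ almost everywhere, and we take the continuous representative. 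The main point needing care is only the handling of $L_s$ in the dyadic summation at $s=4/3$, together with the truncated first block; everything else is a direct application of Lemma~\ref{lem:blockest}.
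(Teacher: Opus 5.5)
Your proof is correct and follows essentially the same route as the paper: dyadic blocks beyond $N$, H\"older's inequality at a fixed $x$ combined with Lemma~\ref{lem:blockest}, a geometric sum with the logarithm treated separately at $s=4/3$, and the uniform Cauchy criterion. The paper simply starts the blocks at $M=N+1$, which Lemma~\ref{lem:blockest} allows for every $M\in\mathbb N$, so your power-of-two start and truncated first block are unnecessary; your a.e.\ identification of the uniform limit with $f$ is a welcome detail that the paper leaves implicit.

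One wording fix. The block computation controls $\sum_j\sup_x\sum_{k\in B_j}|a_k(f)h_k(x)|$, not $\sum_k|a_k(f)|\,\|h_k\|_\infty$. The latter is the standard-scheme quantity of Remark~\ref{rem:technique}, and it can diverge when $\bar a(s)<\mu\le 11/12-1/s$. That sentence should therefore be replaced by your ``more precisely'' statement.
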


\textit{Proof.} Let $K\ge0$, $M=K+1$ and $E_i=\{k:2^{i}M\le k<2^{i+1}M\}$, $i\ge0$;
these blocks partition $\{k>K\}$. On $E_i$ we have $\underline k=k\ge2^{i}M$, so the
$\ell_s$ norm of the coefficients $(a_k(f))_{k\in E_i}$ does not exceed $(2^{i}M)^{-\mu}\|f\|_{s,\mu}$.
H\"older's inequality at a fixed $x$ (for $s=1$ and $s=\infty$, the product of a sum and a
maximum) and \eqref{eq:blockkernel} give
$$
\sum_{k\in E_i}|a_k(f)h_k(x)|\le C\|f\|_{s,\mu}(2^{i}M)^{-(\mu-\bar a(s))}L_s(2^{i}M) .
$$
For $s\ne4/3$ the sum over $i$ is a geometric progression with ratio
$2^{-(\mu-\bar a(s))}<1$. For $s=4/3$ we have $\bar a(s)=0$ and $2+2^{i}M\le(2+M)^{i+1}$, so that
$L_s(2^{i}M)\le(i+1)^{1/4}L_s(M)$, and the series $\sum_i(i+1)^{1/4}2^{-i\mu}$ converges. In both cases
\begin{equation}\label{eq:tailsum}
\sup_{x\in\mathbb R}\sum_{k>K}|a_k(f)h_k(x)|\le C\|f\|_{s,\mu}(K+1)^{-(\mu-\bar a(s))}L_s(K+1).
\end{equation}
The right-hand side tends to zero as $K\to\infty$, so the series \eqref{function} converges
absolutely and uniformly, and its sum is continuous as a uniform limit of continuous partial
sums. For $K=N$ estimate \eqref{eq:tailsum} gives the statement of the lemma, since
$(N+1)^{-(\mu-\bar a(s))}\le N^{-(\mu-\bar a(s))}$ and $L_s(N+1)\le2L_s(N)$. $\Box$

\begin{lemma}\label{lem:smooth}
Let $1\le s\le\infty$, $\mu>0$ and $f\in W_s^\mu$. Then for every method of
$\mathcal S^\theta$ with $\theta>\mu-\bar a(s)$ and for every $N\ge1$
$$
\|S_N f-S_N^\nu f\|_\infty
\le cC(\nu)\|f\|_{s,\mu}N^{-(\mu-\bar a(s))}L_s(N) ,
$$
where $c$ depends only on $s$, $\mu$ and $\theta$.
\end{lemma}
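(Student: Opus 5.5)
We write $S_Nf-S_N^\nu f=\sum_{k=0}^N(1-\nu_k^N)a_k(f)h_k$ and estimate it pointwise, in the same way as in the proof of Lemma~\ref{lem:tail}: we apply H\"older's inequality at a fixed $x$ on dyadic blocks and take the supremum over $x$ only after summation. The term $k=0$ vanishes by \eqref{qual}, since $0^\theta=0$. For the remaining indices we split $\{1,\ldots,N\}$ into the blocks $E_i=\{k:2^i\le k<2^{i+1}\}\cap[1,N]$, $i=0,1,\ldots,I$, where $I=\lfloor\log_2N\rfloor$.

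On $E_i$ condition \eqref{qual} gives $|1-\nu_k^N|\le C(\nu)2^{(i+1)\theta}N^{-\theta}$. The definition \eqref{norm} gives $\|(a_k(f))_{k\in E_i}\|_{\ell_s}\le 2^{-i\mu}\|f\|_{s,\mu}$. By H\"older's inequality with $q=s/(s-1)$ and the block estimate \eqref{eq:blockkernel} with $M=2^i$, which applies also to the truncated block since we only drop nonnegative terms, we get for every $x$
$$
\sum_{k\in E_i}|1-\nu_k^N||a_k(f)||h_k(x)|
\le C\,C(\nu)\|f\|_{s,\mu}N^{-\theta}\,2^{i(\theta-\mu+\bar a(s))}L_s(2^i).
$$
For $s=1$ and $s=\infty$ we use the product of a sum and a maximum, exactly as in Lemma~\ref{lem:tail}.

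It remains to sum over $0\le i\le I$. Since $\theta>\mu-\bar a(s)=\beta$, the exponent $\theta-\beta$ is positive. For $s\ne4/3$ we have $L_s\equiv1$, so the sum is a geometric progression with ratio $2^{\theta-\beta}>1$. It is therefore dominated by its last term, up to a factor depending on $\theta-\beta$:
$$
\sum_{i\le I}2^{i(\theta-\beta)}\le C\,2^{I(\theta-\beta)}\le C\,N^{\theta-\beta}.
$$
Multiplying by $N^{-\theta}$ gives $N^{-\beta}$, which is the claimed bound. For $s=4/3$ we have $\bar a=0$, and $L_s(2^i)\le L_s(N)$ because $2^i\le N$. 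Factoring $L_s(N)$ out leaves the same geometric sum, so we get $N^{-\beta}L_s(N)$.

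The constant depends on $s$ (through \eqref{eq:blockkernel}), on $\mu$ and $\theta$ (through $2^\theta$ and the geometric sum), and linearly on $C(\nu)$, as stated. The only point needing care is the ordering of the steps: the supremum over $x$ must be taken after the H\"older step on each block, and the blockwise sums must be bounded before summing over $i$. Beyond that, the key is the hypothesis $\theta>\beta$. Without it the geometric sum would not be dominated by its top term and an extra $\log N$ or a worse power would appear. Equality of the series with the function is not an issue here, since both $S_Nf$ and $S_N^\nu f$ are finite sums.
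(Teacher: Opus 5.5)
Your proposal is correct and follows the paper's proof essentially step by step. The ingredients are the same: the vanishing $k=0$ term, the dyadic blocks $\{2^i\le k<2^{i+1}\}\cap[1,N]$ with $|1-\nu_k^N|\le C(\nu)2^{\theta}2^{i\theta}N^{-\theta}$, and H\"older's inequality at fixed $x$ combined with \eqref{eq:blockkernel} for $M=2^i$. The proof then bounds $L_s(2^i)\le L_s(N)$ and sums the geometric progression with ratio $2^{\theta-\mu+\bar a(s)}>1$, which gives the factor $N^{\theta-\mu+\bar a(s)}$.
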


\textit{Proof.} For $k=0$ condition \eqref{qual} gives $\nu_0^N=1$, and the term with
$k=0$ in the difference $S_N f-S_N^\nu f=\sum_{k=0}^N(1-\nu_k^N)a_k(f)h_k$ vanishes. Let
$I=\lfloor\log_2N\rfloor$ and $B_i=\{k\le N:2^i\le k<2^{i+1}\}$, $i=0,\ldots,I$. On $B_i$ we have
$\underline k=k\ge2^i$ and $|1-\nu_k^N|\le C(\nu)2^\theta2^{i\theta}N^{-\theta}$. As in the
proof of Lemma \ref{lem:tail}, H\"older's inequality and \eqref{eq:blockkernel} with $M=2^i$
give
$$
\sum_{k\in B_i}|(1-\nu_k^N)a_k(f)h_k(x)|
\le CC(\nu)\|f\|_{s,\mu}N^{-\theta}2^{it}L_s(2^i),
\qquad t=\theta-\mu+\bar a(s)>0 .
$$
Since $L_s(2^i)\le L_s(N)$ and $\sum_{i\le I}2^{it}\le(1-2^{-t})^{-1}2^{It}\le CN^t$,
we obtain $\|S_N f-S_N^\nu f\|_\infty\le cC(\nu)\|f\|_{s,\mu}N^{t-\theta}L_s(N)$, as
required. $\Box$

\begin{lemma}\label{lem:noise}
Let $1\le p\le\infty$, $p'=p/(p-1)$, $f\in {\cal L}_2(\mathbb R)$, and let
\eqref{perturbation} hold. Then for every method of $\mathcal S^\theta$ and for every $N\ge1$
\begin{equation}\label{eq:sigma}
\|S_N^\nu f-S_N^\nu\mathbf f^\delta\|_\infty
\le\bigl(1+C(\nu)\bigr)\delta\,K_{N,p'}^{1/p'}
\le c\,\delta N^{a_+(p)}L_p(N) ,
\end{equation}
where for $p=1$ the quantity $K_{N,\infty}^{1/\infty}$ is replaced by $\max_{k\le N}\|h_k\|_\infty$, and $c$
depends only on $p$ and $C(\nu)$.
\end{lemma}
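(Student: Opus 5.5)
The plan is to write the noise term explicitly and bound it pointwise before taking the supremum over $x$. Since $\mathbf f^\delta-\mathbf f=\bxi$, we have
$$
S_N^\nu f(x)-S_N^\nu\mathbf f^\delta(x)=-\sum_{k=0}^N\nu_k^N\xi_kh_k(x).
$$
Condition \eqref{qual} with $k\le N$ gives $|\nu_k^N|\le1+|1-\nu_k^N|\le1+C(\nu)$. So at a fixed $x$ the modulus of this sum is at most $(1+C(\nu))\sum_{k\le N}|\xi_k||h_k(x)|$.

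Next we apply H\"older's inequality with exponents $p$ and $p'$ to the whole block $0\le k\le N$, at the same fixed $x$. This gives
$$
\sum_{k\le N}|\xi_k||h_k(x)|\le\|\bxi\|_{\ell_p}\Bigl(\sum_{k\le N}|h_k(x)|^{p'}\Bigr)^{1/p'}\le\delta\,K_{N,p'}^{1/p'}.
$$
For $p=1$ the second factor is replaced by $\max_{k\le N}|h_k(x)|\le\max_{k\le N}\|h_k\|_\infty$, and for $p=\infty$ we have $p'=1$ and the bound is $\delta K_{N,1}$. Taking the supremum over $x$ only after the summation yields the first inequality of \eqref{eq:sigma}. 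The finite sum is continuous, so the norm \eqref{metric} applies.

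For the second inequality we treat the ranges of $p$ separately, using Theorem~\ref{sum:kernel} with $q=p'$.
\begin{itemize}
\item[(a)] If $p>4/3$, then $p'<4$ and $K_{N,p'}^{1/p'}\le C(N+1)^{(1-p'/4)/p'}$. Here $(1-p'/4)/p'=1/p'-1/4=3/4-1/p=a_+(p)$, and $(N+1)^{a}\le2^aN^a$.
\item[(b)] If $p=4/3$, then $p'=4$ and $K_{N,4}^{1/4}\le C(\log(2+N))^{1/4}=CL_p(N)$, with $a_+(p)=0$.
\item[(c)] If $1<p<4/3$, then $p'>4$ and $K_{N,p'}$ is bounded, which matches $a_+(p)=0$.
\item[(d)] If $p=1$, estimate \eqref{sum:envmax} gives $\max_k\|h_k\|_\infty\le C$.
\end{itemize}
The constant $c$ then depends only on $p$ and $C(\nu)$.

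There is no real obstacle here. The only point needing care is the exponent bookkeeping $1/p'-1/4=a(p)$ together with the endpoint conventions $p=1$ and $p=\infty$.
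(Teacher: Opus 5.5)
Your proposal is correct and follows the paper's proof essentially step for step. You use the bound $|\nu_k^N|\le 1+C(\nu)$ from \eqref{qual}, apply H\"older's inequality at a fixed $x$ over the whole block $0\le k\le N$ and only then take the supremum, and finish with Theorem~\ref{sum:kernel} for $q=p'$ (split into $p>4/3$, $p=4/3$, $p<4/3$) together with \eqref{sum:envmax} for $p=1$.
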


\textit{Proof.} For $0\le k\le N$ condition \eqref{qual} gives
$|1-\nu_k^N|\le C(\nu)(k/N)^\theta\le C(\nu)$, whence $|\nu_k^N|\le1+C(\nu)$. The right-hand side of
\eqref{ModVer} is linear in the input sequence, so
$S_N^\nu f-S_N^\nu\mathbf f^\delta=-\sum_{k=0}^N \nu_k^N \xi_k h_k$, and at a fixed
$x$ H\"older's inequality gives
$$
\Bigl|\sum_{k=0}^N \nu_k^N \xi_k h_k(x)\Bigr|
\le\bigl(1+C(\nu)\bigr)\|\bxi\|_{\ell_p}\Bigl(\sum_{k=0}^N|h_k(x)|^{p'}\Bigr)^{1/p'} ;
$$
the supremum over $x$ gives the first inequality in \eqref{eq:sigma}. The second inequality follows from Theorem
\ref{sum:kernel} with $q=p'$: for $p<4/3$ we have $p'>4$ and $K_{N,p'}\asymp1$, for $p=4/3$,
$K_{N,4}\asymp\log(2+N)$, for $p>4/3$, $K_{N,p'}^{1/p'}\asymp(N+1)^{3/4-1/p}
\le2^{a(p)}N^{a(p)}$; for $p=1$ estimate \eqref{sum:envmax} suffices. $\Box$

\begin{theorem}\label{thm:accuracy}
Let $1\le s\le\infty$, $1\le p\le\infty$, $\mu>\max\{0,\bar a(s)\}$, let
\eqref{perturbation} hold, and let a method of $\mathcal S^\theta$ have $\theta>\mu-\bar a(s)$.
Then for every $f\in W_s^\mu$ and every $N\ge1$
\begin{equation}\label{eq:acc}
\|f-S_N^\nu\mathbf f^\delta\|_\infty
\le c\bigl(N^{-(\mu-\bar a(s))}L_s(N)+\delta N^{a_+(p)}L_p(N)\bigr) ,
\end{equation}
and if $\gamma_1 \delta^{-1/\rho}\le N\le\gamma_2 \delta^{-1/\rho}$ for some
$0<\gamma_1 \le\gamma_2$, then
$$
\|f-S_N^\nu\mathbf f^\delta\|_\infty\le c\,\delta^{e_{\mathrm{tr}}}
\qquad(s\ne4/3,\ p\ne4/3),
$$
$$
\|f-S_N^\nu\mathbf f^\delta\|_\infty
\le c\,\delta^{e_{\mathrm{tr}}}\bigl(\log(\mathrm e/\delta)\bigr)^{1/4}
\qquad(s=4/3\ \text{or}\ p=4/3) .
$$
The constants depend only on $s$, $p$, $\mu$, $\theta$, $C(\nu)$, $\gamma_1$ and $\gamma_2$. In
particular, the level $N_*=\lceil\delta^{-1/\rho}\rceil$ satisfies this condition with
$\gamma_1=1$, $\gamma_2=2$, and the method uses $N_*+1\le3\delta^{-1/\rho}$ perturbed
Fourier--Hermite coefficients.
\end{theorem}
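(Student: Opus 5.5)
The plan is to obtain \eqref{eq:acc} by adding up the three terms of the decomposition \eqref{eq:fullError}, and then to substitute the prescribed range of $N$.

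\textbf{Step 1: the general bound \eqref{eq:acc}.} Since $f\in W_s^\mu$ is the unit ball, we have $\|f\|_{s,\mu}\le1$. The three terms are handled as follows.
\begin{itemize}
\item Lemma~\ref{lem:tail} bounds the first difference by $cN^{-(\mu-\bar a(s))}L_s(N)$. The same lemma guarantees pointwise convergence of \eqref{function}, so the decomposition makes sense pointwise.
\item The assumption $\theta>\mu-\bar a(s)$ lets us apply Lemma~\ref{lem:smooth}, which bounds the second difference by $cC(\nu)N^{-(\mu-\bar a(s))}L_s(N)$.
\item Lemma~\ref{lem:noise} bounds the third difference by $c\,\delta N^{a_+(p)}L_p(N)$.
\end{itemize}
Taking the supremum over $x$ of the pointwise sum and using the triangle inequality gives \eqref{eq:acc}, with a constant depending on $s,p,\mu,\theta,C(\nu)$.

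\textbf{Step 2: substituting $N\asymp\delta^{-1/\rho}$.} Recall $\beta=\mu-\bar a(s)>0$ and $\rho=\beta+a_+(p)$. For $\gamma_1\delta^{-1/\rho}\le N\le\gamma_2\delta^{-1/\rho}$ the two powers satisfy
\[
N^{-\beta}\le\gamma_1^{-\beta}\delta^{\beta/\rho}=\gamma_1^{-\beta}\delta^{e_{\mathrm{tr}}},
\]
\[
\delta N^{a_+(p)}\le\gamma_2^{a_+(p)}\delta^{1-a_+(p)/\rho}=\gamma_2^{a_+(p)}\delta^{\beta/\rho}.
\]
The two exponents agree because $1-a_+(p)/\rho=\beta/\rho$.

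For the logarithmic factors we use $N\le\gamma_2\delta^{-1/\rho}$, which gives
\[
\log(2+N)\le\log(2+\gamma_2)+\rho^{-1}\log(1/\delta)\le C\log(\mathrm e/\delta).
\]
Hence $L_s(N),L_p(N)\le C(\log(\mathrm e/\delta))^{1/4}$ at the critical points, while these factors equal $1$ otherwise. One check is needed: $N\ge1$ must hold so that the lemmas apply. This is automatic for $N_*$. For general $\gamma_1$ the hypothesis of the theorem already requires $N\ge1$.

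\textbf{Step 3: the level $N_*$.} Since $\delta<1$, we have $\delta^{-1/\rho}>1$. Therefore
\[
\delta^{-1/\rho}\le N_*=\lceil\delta^{-1/\rho}\rceil<\delta^{-1/\rho}+1<2\delta^{-1/\rho},
\]
so the condition holds with $\gamma_1=1$, $\gamma_2=2$. The method uses $N_*+1<\delta^{-1/\rho}+2<3\delta^{-1/\rho}$ coefficients.

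No step is a real obstacle. The only point needing care is the bookkeeping of the logarithmic factors: $L_s$ and $L_p$ must be bounded uniformly in $\delta$ through the upper bound $\gamma_2\delta^{-1/\rho}$ on $N$.
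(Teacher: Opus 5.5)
Your proposal is correct and follows the paper's proof essentially step for step. The common steps are Lemmas~\ref{lem:tail}, \ref{lem:smooth} and \ref{lem:noise} inserted into \eqref{eq:fullError}, the balancing identity $1-a_+(p)/\rho=e_{\mathrm{tr}}$, the bound $\log(2+N)\le\log(2+\gamma_2)+\rho^{-1}\log(1/\delta)$ for the factors $L_s,L_p$, and the elementary estimates $N_*<2\delta^{-1/\rho}$ and $N_*+1<3\delta^{-1/\rho}$, which use $\delta^{-1/\rho}>1$.
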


\textit{Proof.} By Lemmas \ref{lem:tail}, \ref{lem:smooth} and \ref{lem:noise},
\eqref{eq:fullError} gives \eqref{eq:acc} for every $N\ge1$ and every
$0<\delta<1$.

Now let $\gamma_1 \delta^{-1/\rho}\le\allowbreak N\le\allowbreak\gamma_2 \delta^{-1/\rho}$. The left
inequality gives $N^{-(\mu-\bar a(s))}\le\gamma_1^{-(\mu-\bar a(s))}\delta^{e_{\mathrm{tr}}}$, and the right one gives
$\delta N^{a_+(p)}\le\gamma_2^{a_+(p)}\delta^{1-a_+(p)/\rho}
=\gamma_2^{a_+(p)}\delta^{e_{\mathrm{tr}}}$, since
$$
1-\frac{a_+(p)}{\rho}=\frac{\rho-a_+(p)}{\rho}
=\frac{\mu-\bar a(s)}{\rho}=e_{\mathrm{tr}} .
$$
This equality means that with this choice of $N$ both terms in \eqref{eq:acc} have the
same order $\delta^{e_{\mathrm{tr}}}$. For $s\ne4/3$ and $p\ne4/3$ all logarithmic factors
equal one. Let $s=4/3$ or $p=4/3$. Since $\delta^{-1/\rho}>1$, we have
$2+N\le(2+\gamma_2)\delta^{-1/\rho}$, whence
$\log(2+N)\le C'\log(\mathrm e/\delta)$ with $C'=\max\{1,\log(2+\gamma_2),1/\rho\}$. Therefore
$L_s(N),L_p(N)\le(C')^{1/4}(\log(\mathrm e/\delta))^{1/4}$, and each term in
\eqref{eq:acc} carries at most the power $1/4$ of the logarithm.

Finally, $t=\delta^{-1/\rho}>1$, so $t\le N_*\le t+1\le2t$ and $N_*+1\le2t+1\le3t$;
method \eqref{ModVer} uses the data only for $k=0,1,\ldots,N_*$. $\Box$

\vskip 2mm

\begin{remark}\label{rem:technique}\rm
The upper bounds of Lemmas \ref{lem:tail}, \ref{lem:smooth}, \ref{lem:noise} and of Theorem
\ref{thm:accuracy} are obtained not by the standard scheme used, for example, in
\cite[Theorem~3.1]{MathePereverzev2002} (for $s=p=2$ for systems with
$\|\varphi_k\|_\infty\asymp k^{\beta}$, $\beta\ge0$) and in \cite{Sol_Stas_UMZ2022},
\cite{SemSol_Sim}, \cite{SolodkyVolynets2026}.
In the standard scheme each term is first bounded
by the maximum of its Hermite function, $|h_k(x)|\le\|h_k\|_\infty$, and only then summed over $k$ by
means of H\"older's inequality. Here H\"older's inequality is applied at a fixed $x$
to the whole block of indices, and the supremum over $x$ is taken after the summation: the quantities
\eqref{sum:law}, \eqref{eq:blockkernel} and \eqref{eq:sigma} are computed for
the block as a whole. Already for unit weights this changes the order for $1\le q\le12$ (for $q>12$
both sums below are bounded): by Lemma \ref{sum:point} and Theorem \ref{sum:kernel}, for
$1\le q<\infty$
$$
\sum_{k=0}^N\|h_k\|_\infty^q\asymp\sum_{j=1}^{N+1}j^{-q/12},
\qquad
\sup_{x\in\mathbb R}\sum_{k=0}^N|h_k(x)|^q\asymp\sum_{j=1}^{N+1}j^{-q/4},
$$
since the maximum of the function $h_k$ is attained near the point
$x_k\approx\sqrt{2k+1}$: outside the strip $|x^2-2k-1|\le A(2k+1)^{1/3}$ the bound \eqref{sum:env}
is smaller than \eqref{sum:maxlow} for a sufficiently large absolute constant $A$.

The standard scheme leads to the following estimates. Set
$$
\tilde\beta=\mu-\Bigl(\frac{11}{12}-\frac1s\Bigr),
\qquad
\tilde a(p)=\max\Bigl\{\frac{11}{12}-\frac1p,\,0\Bigr\},
\qquad
\tilde e=\frac{\tilde\beta}{\tilde\beta+\tilde a(p)} .
$$
For $\mu>\max\{0,11/12-1/s\}$ and a method of $\mathcal S^\theta$ with $\theta>\tilde\beta$ it gives, for
every $N\ge1$,
$$
\|f-S_N^\nu\mathbf f^\delta\|_\infty
\le c\bigl(N^{-\tilde\beta}+\delta N^{\tilde a(p)}\tilde L_p(N)\bigr),
$$
where $\tilde L_p(N)=(\log(2+N))^{1/12}$ for $p=12/11$ and $\tilde L_p(N)=1$ for $p\ne12/11$, and,
for $N\asymp\delta^{-1/(\tilde\beta+\tilde a(p))}$, the estimate $c\,\delta^{\tilde e}$ for
$p\ne12/11$ and $c\,\delta(\log(\mathrm e/\delta))^{1/12}$ for $p=12/11$. Here
$\tilde e=(\mu+1/s-11/12)/(\mu+1/s-1/p)$ for $p>12/11$ and $\tilde e=1$ for $p<12/11$.

Let us compare these estimates with Theorem \ref{thm:accuracy}. By \eqref{exps} we have
$$
\frac{11}{12}-\frac1s-\bar a(s)=
\left\{
\begin{array}{ll}
\frac16, & s>4/3,\\[1mm]
\frac23\bigl(1-\frac1s\bigr), & s<4/3,
\end{array}
\right.
\qquad
\tilde a(p)-a_+(p)=
\left\{
\begin{array}{ll}
\frac16, & p>4/3,\\[1mm]
\frac{11}{12}-\frac1p, & 12/11<p<4/3,\\[1mm]
0, & p\le12/11 ,
\end{array}
\right.
$$
hence $\tilde\beta\le\beta$ and $\tilde a(p)\ge a_+(p)$. Moreover, the condition
$\mu>\max\{0,11/12-1/s\}$ implies $\mu>\max\{0,\bar a(s)\}$, and every method with $\theta>\beta$
also satisfies the condition $\theta>\tilde\beta$, so that for $\mu>\max\{0,11/12-1/s\}$ and
$\theta>\beta$ both estimates
refer to the same method. The comparison below is made for $s\ne4/3$ and $p\ne4/3$, where
the estimate of Theorem \ref{thm:accuracy} contains no factors $L_s$ and $L_p$. Since $t/(t+a)$ increases in $t>0$ and decreases in
$a\ge0$, we have $\tilde e\le e_{\mathrm{tr}}$, the inequality being strict for $p>12/11$, while for
$p=12/11$ the standard scheme loses a logarithmic factor. For $p<12/11$ both estimates
have order $\delta$, but the standard scheme needs $\asymp\delta^{-1/\tilde\beta}$
coefficients, while the method of Theorem \ref{thm:accuracy} with the level $N_*$ needs $\asymp\delta^{-1/\beta}$; these orders
coincide only for $s=1$.

By Theorem \ref{thm:diag}\,(i), (iii) the order $\delta^{e_{\mathrm{tr}}}$ is not improved by any finite
diagonal method, and the methods $\mathcal S^\theta$ with $\theta>\beta$ attain it. Therefore
the difference of the exponents is a loss caused by the standard estimation scheme, and not a property of the method.
For comparison, in
\cite{SolodkyVolynets2026} the same scheme gave the exact order (ibid., Lemma~2.1 and
Theorem~4.7), since for the Laguerre functions the sum of a block of $N$ functions with indices from
$[N,3N]$ has order $N$ in the weighted uniform norm, that is, the order of the sum of its
maxima: the lower bound is given in Lemma~4.1 of the same paper, and the upper bound $CN$ follows
from the uniform boundedness of the Laguerre functions (ibid., (2.4)). For the Hermite functions the sum of maxima over such a block
$\Omega$ has order $N^{11/12}$, whereas by Theorem \ref{sum:kernel} with $q=1$ and
Lemma \ref{lem:block}\,{\rm(i)}
$$
\max_{\varepsilon_k=\pm1}\Bigl\|\sum_{k\in\Omega}\varepsilon_k h_k\Bigr\|_\infty\asymp N^{3/4} .
$$
\end{remark}

\section{Lower bounds and the exact order of diagonal methods}\label{diagonal}

In this section Theorem~\ref{thm:diag} is proved. The main tool is Lemma~\ref{lem:block}
on lower bounds for sums of Hermite functions with indices from the interval $[N,3N]$. With its help
the lower bounds of Theorem~\ref{thm:diag} are obtained: for every finite diagonal method,
for all $s$ and $p$; for all recovery operators, for $s\ge4/3$ or $p\le4/3$.
Together with the upper bounds of Theorem~\ref{thm:accuracy}, for $s\ne4/3$ and $p\ne4/3$ they give the
exact order $\delta^{e_{\mathrm{tr}}}$ of the error of finite diagonal methods and, if
in addition $s>4/3$ or $p<4/3$, also the exact order of the smallest error among all
recovery operators.

Let, as before, $1\le s,p\le\infty$. The exponents are defined in \eqref{exps} and
\eqref{beta}. For a coefficient sequence we use the same norm
$\|\cdot\|_{s,\mu}$ as in \eqref{norm}.

\begin{lemma}\label{lem:block}
There exist absolute constants $\kappa_0=19/50$, $\kappa\in(0,1)$, $c_1>0$ and an integer $N_0$ such that
for every integer $N\ge N_0$ and every set $\Omega$ of $N$ distinct integers of the
interval $[N,3N]$ the following holds.
\begin{itemize}
\item[\rm(i)] At the point $x_0=\kappa_0 N^{-1/2}$, for all integers $k\in[N,3N]$,
\begin{equation}\label{bulkpoint}
|h_k(x_0)|\ge\frac1{4\sqrt\pi}\,k^{-1/4} ,
\end{equation}
and therefore, for $\varepsilon_k=\operatorname{sign}h_k(x_0)$,
\begin{equation}\label{bulksum}
\Bigl\|\sum_{k\in\Omega}\varepsilon_k h_k \Bigr\|_\infty
\ge\sum_{k\in\Omega}|h_k(x_0)|
\ge\frac{3^{-1/4}}{4\sqrt\pi}\,N^{3/4} .
\end{equation}
\item[\rm(ii)] There exist a set $W\subset\Omega$ with $\card(W)\ge\kappa N^{1/3}$ and a point
$x_W$ such that $|h_k(x_W)|\ge c_1N^{-1/12}$ for all $k\in W$.
\item[\rm(iii)] For any $1\le s\le\infty$ and $\mu>0$ the function
$$
f=(3N)^{-\mu}N^{-1/s}\sum_{k\in\Omega}\varepsilon_k h_k
$$
with the signs from {\rm(i)} satisfies $\|f\|_{s,\mu}\le1$, its coefficients $a_k(f)$ vanish
outside $\Omega$, and
\begin{equation}\label{blocktail}
\|f\|_\infty\ge\tfrac1{10}\,3^{-\mu}N^{-\mu+3/4-1/s} .
\end{equation}
\end{itemize}
\end{lemma}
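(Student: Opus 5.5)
The plan is to prove the three parts in order, with (i) driven by the local behaviour of $h_k$ near the origin, (ii) by the Airy behaviour near the turning point, and (iii) a direct consequence of (i).

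\emph{Part (i).} For $k\in[N,3N]$ we have $\lambda_k\in[2N+1,6N+7]$. Hence $\sqrt{\lambda_k}\,x_0$ lies, for $N\ge N_0$, in a fixed interval contained in $[0.53,0.94]$, since $\kappa_0\sqrt2\approx0.537$ and $\kappa_0\sqrt6\approx0.93$. On this interval both $\cos$ and $\sin$ exceed $1/2$. The next step is to show that on $|x|\le x_0$
$$
h_k(x)=h_k(0)\cos(\sqrt{\lambda_k}\,x)+\frac{h_k'(0)}{\sqrt{\lambda_k}}\sin(\sqrt{\lambda_k}\,x)+O(k^{-1/4}N^{-1}).
$$
For this I would write the equation $h_k''+(\lambda_k-x^2)h_k=0$ as a Volterra integral equation, by variation of constants around $u''+\lambda_k u=0$. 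The perturbation $x^2h_k$ is integrated over an interval of length $x_0\asymp N^{-1/2}$, which gives the stated error after a Gronwall step; it also uses the bound $|h_k|\le Ck^{-1/4}$ near $0$, which follows from \eqref{sum:env}.

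For even $k$, $h_k'(0)=0$ and $|h_k(0)|$ is bounded below by \eqref{wallis}. For odd $k$, $h_k(0)=0$, and the recurrence $h_k'=\sqrt{k/2}\,h_{k-1}-\sqrt{(k+1)/2}\,h_{k+1}$ together with \eqref{zero} gives $|h_k'(0)|/\sqrt{\lambda_k}\ge c'k^{-1/4}$ with an explicit constant close to $\pi^{-1/2}$. In both cases the main term is at least $\tfrac12\pi^{-1/2}(k+\tfrac12)^{-1/4}$, and for $N\ge N_0$ the error term leaves at least $\frac1{4\sqrt\pi}k^{-1/4}$. Summing over the $N$ elements of $\Omega$, each at least $\frac1{4\sqrt\pi}(3N)^{-1/4}$, gives \eqref{bulksum}, since the sup norm is at least the value at $x_0$.

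\emph{Part (ii).} I use the uniform Plancherel--Rotach asymptotics in the transition region (Szeg\H{o}, (8.22.11)):
$$
h_k\bigl(\sqrt{\lambda_k}-t\,2^{-1/2}\lambda_k^{-1/6}\bigr)=2^{1/2}\lambda_k^{-1/12}\bigl(\mathrm{Ai}(-t)+O(k^{-2/3})\bigr),
$$
uniformly for $t$ in compact sets. Choose $\eta>0$ with $\mathrm{Ai}(-t)\ge\mathrm{Ai}(0)/2$ for $t\in[-\eta,\eta]$. Cut $[2N+1,6N+7]$ into consecutive intervals of length $\eta N^{1/3}$; there are at most $CN^{2/3}/\eta$ of them. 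By pigeonhole, one interval contains $\lambda_k$ for at least $\kappa N^{1/3}$ indices $k\in\Omega$; these form $W$. Let $x_W^2$ be the midpoint of that interval. For $k\in W$, $|x_W^2-\lambda_k|\le\eta N^{1/3}$ corresponds to $|t|\lesssim\eta$ after rescaling (adjust $\eta$ by an absolute factor), so $|h_k(x_W)|\ge c_1N^{-1/12}$.

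\emph{Part (iii).} All coefficients are $\pm(3N)^{-\mu}N^{-1/s}$ on $\Omega$ and vanish elsewhere. Since $\underline k\le3N$, we get $\sum_{k\in\Omega}\underline k^{s\mu}|a_k|^s\le N\cdot N^{-1}=1$, with the obvious modification for $s=\infty$. Then \eqref{bulksum} gives $\|f\|_\infty\ge(3N)^{-\mu}N^{-1/s}\frac{3^{-1/4}}{4\sqrt\pi}N^{3/4}$, and $\frac{3^{-1/4}}{4\sqrt\pi}\approx0.107>1/10$, which is \eqref{blocktail}.

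The main obstacle is the quantitative step in (i). The margin between the constant $\tfrac12\pi^{-1/2}$ from the main term and the target $\frac1{4\sqrt\pi}$ must absorb the Volterra error and the Wallis-type constants explicitly, including the odd-index derivative constant. Part (ii) only needs a citation with non-explicit constants.
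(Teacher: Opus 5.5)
Your proposal is correct. Parts (ii) and (iii) follow the paper: Airy asymptotics at the turning point plus pigeonhole, then direct substitution. The main difference in (ii) is that your symmetric window, with $x_W^2$ at its midpoint, needs the Airy formula for $t$ of both signs. Szeg\"{o}'s Theorem~8.22.9 supplies this for bounded complex $t$. The paper instead takes $x_W=\lambda_{k_*}^{1/2}$ with $k_*=\min W$, so that only $0\le t_k<t_0$ occurs. The constants in your displayed Airy formula are off: in the normalization \eqref{herm} it reads $h_k(\lambda_k^{1/2}-2^{-1/2}k^{-1/6}t)=2^{1/4}k^{-1/12}(\mathrm{Ai}(-t)+O(k^{-2/3}))$. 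This is harmless, since only their absoluteness is used.

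Part (i) takes a genuinely different route. The paper quotes Szeg\"{o}'s bounded-$x$ expansion (8.22.8) with remainder $O(k^{-1})$, which treats both parities at once through the phase $k\pi/2$. It then bounds the amplitude $A_k$ via the duplication formula and log-convexity of $\Gamma$. You use variation of constants from exact initial data. This forces a parity split, but needs only \eqref{zero}--\eqref{wallis} and the derivative recurrence. Since $h_{k-1}(0)$ and $h_{k+1}(0)$ have opposite signs, for odd $k$ you get $|h_k'(0)|=\sqrt{k/2}\,|h_{k-1}(0)|+\sqrt{(k+1)/2}\,|h_{k+1}(0)|\ge2\pi^{-1/2}(k/2)^{1/4}$. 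Hence $|h_k'(0)|/\sqrt{\lambda_k}\ge\pi^{-1/2}(k+\frac12)^{-1/4}$, the same coefficient bound as for even $k$.

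Your remainder is also much smaller than the paper's. The Volterra term at $x_0$ is at most $x_0^3\lambda_k^{-1/2}\sup_{[0,x_0]}|h_k|/3\le Ck^{-1/4}N^{-2}$ by \eqref{sum:env}, with no Gronwall step needed. So the step you flag as the main obstacle is not one. The main term is at least $\frac1{2\sqrt\pi}(k+\frac12)^{-1/4}\ge\frac{0.9}{2\sqrt\pi}k^{-1/4}$, and the remainder is absorbed by enlarging $N_0$, just as the paper absorbs $B/N$.
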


\textit{Proof.} Items (i) and (ii) are derived from two asymptotic formulas of Szeg\"{o}: in
the oscillatory region for (i) and near the turning point for (ii). At the chosen point the main
term of the formula is bounded away from zero in absolute value, and the remainder term is small for $N\ge N_0$; item (iii)
follows by direct substitution.

Set $\lambda_k=2k+1$. Formula (8.22.8) of \cite{Szego}, after
division by $(2^k k!\sqrt\pi)^{1/2}$, gives
\begin{equation}\label{pr}
h_k(x)=A_k \Bigl\{\cos\Bigl(\lambda_k^{1/2}x-\frac{k\pi}2\Bigr)
+\frac{x^3}6\,\lambda_k^{-1/2}\sin\Bigl(\lambda_k^{1/2}x-\frac{k\pi}2\Bigr)+R_k(x)\Bigr\},
\end{equation}
where $A_k=\Gamma(k+1)\Gamma(k/2+1)^{-1}(2^k k!\sqrt\pi)^{-1/2}$ and $|R_k(x)|\le Bk^{-1}$ uniformly on the interval $[0,1]$ with an absolute constant $B$.
The duplication formula $\Gamma(k+1)=\pi^{-1/2}2^k \Gamma((k+1)/2)\Gamma(k/2+1)$ gives
$A_k^2=\pi^{-1}\Gamma((k+1)/2)/\Gamma(k/2+1)$, and the log-convexity of the gamma function gives
$\Gamma(k/2+1)^2\le\Gamma((k+1)/2)\,\Gamma(k/2+3/2)=\frac{k+1}2\,\Gamma((k+1)/2)^2$. Therefore
for every $k\ge1$ we have $A_k\ge\pi^{-1/2}((k+1)/2)^{-1/4}\ge\pi^{-1/2}k^{-1/4}$.

We prove (i). Let $k\in[N,3N]$ be an integer. The point $x_0\le19/50$ belongs to $[0,1]$, and the quantity
$\varphi_k=\lambda_k^{1/2}x_0=\kappa_0((2k+1)/N)^{1/2}$, by $2<(2k+1)/N\le7$, satisfies
$\pi/6<\kappa_0\sqrt2=0.537\ldots<\varphi_k\le\kappa_0\sqrt7=1.005\ldots<\pi/3$.
For $k=2m$ we have $\cos(\varphi_k-k\pi/2)=(-1)^m\cos\varphi_k$, and for $k=2m+1$ it equals
$(-1)^m\sin\varphi_k$. On $[\pi/6,\pi/3]$ both functions $\cos t$ and $\sin t$ are at least $1/2$;
therefore the main term in the braces of \eqref{pr} is at least $1/2$ in absolute value. Since $x_0\le19/50$ and
$\lambda_k\ge1$, the second term does not exceed $\frac16(19/50)^3<10^{-2}$, and for $N\ge5B$ we have
$|R_k(x_0)|\le B/N\le1/5$. Consequently, $|h_k(x_0)|\ge A_k(\frac12-10^{-2}-\frac15)>A_k/4$,
and the estimate of $A_k$ gives \eqref{bulkpoint}. For $\varepsilon_k=\operatorname{sign}h_k(x_0)$ all
terms $\varepsilon_k h_k(x_0)=|h_k(x_0)|$ are nonnegative, and the norm bounds the value at a
point from above. Since $\Omega$ consists of $N$ indices not exceeding $3N$, we obtain \eqref{bulksum}.

We prove (ii). Theorem~8.22.9 of \cite{Szego} in the normalization \eqref{herm} gives
\begin{equation}\label{airyexp}
h_k(x)=\gamma_{\mathcal A}\,k^{-1/12}\bigl\{\mathcal A(t)+O(k^{-2/3})\bigr\},
\qquad
x=\lambda_k^{1/2}-2^{-1/2}3^{-1/3}k^{-1/6}t .
\end{equation}
Here $t=2^{1/2}3^{1/3}k^{1/6}(\lambda_k^{1/2}-x)$ is the parameter that gives the position of $x$
relative to the point $\lambda_k^{1/2}$. The formula is uniform in $|t|\le1$, where
$\gamma_{\mathcal A}>0$ is an absolute constant; the constant in the remainder
is denoted by $B'$. The function $\mathcal A(t)$ is the Airy function of Section~1.81 of the same book. It
is continuous, and by its power series $\mathcal A(0)=\pi/(3\Gamma(2/3))>0$. By continuity
there is $\eta>0$ such that $|\mathcal A(t)-\mathcal A(0)|<\mathcal A(0)/2$ for $|t|<\eta$.
Taking $t_0=\min\{1,\eta/2\}$, we obtain $\mathcal A(t)\ge\mathcal A(0)/2$ for $0\le t\le t_0$;
the number $t_0$ is absolute, since the function $\mathcal A$ is fixed.

Set $\tau=t_0/\sqrt3$ and $J=\lfloor\tau N^{1/3}\rfloor$. For $N\ge\tau^{-3}$ we have
$u=\tau N^{1/3}\ge1$, so $J\ge1$ and $J\ge u/2$ (for $1\le u<2$ this is obvious,
and for $u\ge2$ it follows from $\lfloor u\rfloor>u-1\ge u/2$). Moreover,
$\tau\le1/\sqrt3$ and $N\ge6$, whence $J\le\tau N^{1/3}\le N/2\le N-1$. We split the $2N+1$ integers of the interval $[N,3N]$ into
consecutive intervals of at most $J$ numbers each; there are at most
$(2N+1)/J+1\le3N/J$ of them, so one of them contains at least $J/3\ge\kappa N^{1/3}$ points of
$\Omega$, where $\kappa=\tau/6$. Denote the set of these points by $W$, and let
$k_*=\min W$; then $0\le k-k_*<J\le\tau N^{1/3}$ for $k\in W$.

Set $x_W=\lambda_{k_*}^{1/2}$. Substituting $x=x_W$ into the expression for $t$ after
\eqref{airyexp}, we obtain $t_k=2^{1/2}3^{1/3}k^{1/6}(\lambda_k^{1/2}-\lambda_{k_*}^{1/2})$.
Since $k\ge k_*$ and $\lambda_k=2k+1$, we have $t_k\ge0$. From
$\lambda_k^{1/2}+\lambda_{k_*}^{1/2}>2(2N)^{1/2}$ and $k\le3N$ it follows that
$$
t_k=2^{1/2}3^{1/3}k^{1/6}\,\frac{2(k-k_*)}{\lambda_k^{1/2}+\lambda_{k_*}^{1/2}}
<2^{1/2}3^{1/3}(3N)^{1/6}\,\frac{\tau N^{1/3}}{(2N)^{1/2}}
=\sqrt3\,\tau=t_0 .
$$
Thus $0\le t_k<t_0\le1$, and to $t=t_k$ the estimate
$\mathcal A(t)\ge\mathcal A(0)/2$ chosen above applies. The remainder in \eqref{airyexp} does not exceed
$B'k^{-2/3}\le\mathcal A(0)/4$ for $k\ge(4B'/\mathcal A(0))^{3/2}$. Hence for $k\in W$,
taking into account $k\le3N$,
$$
|h_k(x_W)|\ge\tfrac14\gamma_{\mathcal A}\mathcal A(0)\,k^{-1/12}\ge c_1N^{-1/12},
\qquad c_1=\tfrac14\,3^{-1/12}\gamma_{\mathcal A}\mathcal A(0) .
$$

We prove (iii). By the orthonormality of the system $\{h_k\}$ the coefficients of the function $f$ from (iii)
are equal to $(3N)^{-\mu}N^{-1/s}\varepsilon_k$ on $\Omega$ and to zero outside $\Omega$.
On $\Omega$ we have $\underline k=k\le3N$, and for $\mu>0$ this gives
$\underline k^{\,\mu}|a_k(f)|\le N^{-1/s}$; therefore $\|f\|_{s,\mu}\le(N\cdot N^{-1})^{1/s}=1$
for $s<\infty$ and $\|f\|_{\infty,\mu}\le1$. The norm bounds the value at the point $x_0$ from above, and by
\eqref{bulksum}
$$
\|f\|_\infty\ge(3N)^{-\mu}N^{-1/s}\sum_{k\in\Omega}|h_k(x_0)|
\ge\frac{3^{-1/4}}{4\sqrt\pi}\,3^{-\mu}N^{-\mu+3/4-1/s},
$$
which gives \eqref{blocktail}, since $3^{-1/4}(4\sqrt\pi)^{-1}=0.107\ldots$

We add that all restrictions on $N$ are satisfied for $N\ge N_0$, where
$$
N_0=\lceil\max\{5B,\ \tau^{-3},\ (4B'/\mathcal A(0))^{3/2}\}\rceil .
$$
This is an absolute number, since
the constants $B$ and $B'$ of the remainder terms in \eqref{pr} and \eqref{airyexp} and the number $t_0$
are absolute.
$\Box$

\subsection{Proof of Theorem \ref{thm:diag}}

\textit{Two-point estimate.} Let $g\in W_s^\mu$ and $\|Gg\|_{\ell_p}\le\delta$. The functions $g$
and $-g$ correspond to the admissible errors $-Gg$ and $Gg$, which give the same zero
data. If $u=\psi(0)$, then the triangle inequality gives
\[
\max\{\|g-u\|_\infty,\|-g-u\|_\infty\}\ge\|g\|_\infty.
\]
Consequently, the error \eqref{eq:general-error} of every operator $\psi G\in\Psi$ is at
least $\|g\|_\infty$. This argument uses neither the linearity of the operator nor a restriction on
the number of data.

\textit{The threshold in $\delta$.} Let $N_0$ be from Lemma~\ref{lem:block} and
$\delta_0=N_0^{-\rho}\in(0,1]$; for $0<\delta<\delta_0$ we have
$\lceil\delta^{-1/\rho}\rceil\ge N_0$. It suffices to prove the lower bounds in (i) and (ii) for
$0<\delta<\delta_0$. Indeed, the quantities \eqref{eq:localerror} and \eqref{eq:general-error}
do not decrease in $\delta$, since the supremum is taken over a larger ball of errors, and hence the
infimum \eqref{eq:general-minimax} does not decrease either. If the estimate $e_\delta\ge c\,\delta^{e_{\mathrm{tr}}}$
is proved for $\delta<\delta_0$, then for $\delta_0\le\delta<1$
$$
e_\delta\ge e_{\delta_0/2}\ge c\,(\delta_0/2)^{e_{\mathrm{tr}}}
\ge\tfrac c2\,N_0^{-\beta}\,\delta^{e_{\mathrm{tr}}},
$$
since $e_{\mathrm{tr}}\le1$, $\delta_0^{e_{\mathrm{tr}}}=N_0^{-\beta}$ and
$\delta^{e_{\mathrm{tr}}}<1$; the same holds for $\mathcal E_\delta$. The number $N_0$ is absolute, so
the new constant depends only on $s,p,\mu$.

\textit{The lower bound in} (ii). For $p\le4/3$ take $g=\delta h_0$. Then
$\|g\|_{s,\mu}=\delta<1$, $\|Gg\|_{\ell_p}=\delta$, and
$\|g\|_\infty=\pi^{-1/4}\delta=\pi^{-1/4}\delta^{e_{\mathrm{tr}}}$, since $a_+(p)=0$ and
$e_{\mathrm{tr}}=1$.

Let $s\ge4/3$ and $p>4/3$. Here $\bar a(s)=a(s)$ (for $s=4/3$ both quantities are zero),
$\beta=\mu-a(s)>0$ and $\rho=\mu+1/s-1/p>0$. Set $N=\lceil\delta^{-1/\rho}\rceil$ and
$\Lambda=\{N,N+1,\ldots,2N-1\}$. Since $\delta<\delta_0$, we have $N\ge N_0$, and to these $N$
indices Lemma~\ref{lem:block} applies. Let $g$ be the function of its item~{\rm(iii)}:
\[
g=(3N)^{-\mu}N^{-1/s}\sum_{k\in\Lambda}\varepsilon_k h_k,
\qquad \varepsilon_k=\operatorname{sign}h_k(x_0).
\]
By Lemma~\ref{lem:block}\,{\rm(iii)} $\|g\|_{s,\mu}\le1$ and
\[
\|g\|_\infty\ge\tfrac1{10}\,3^{-\mu}N^{-\mu+3/4-1/s}
=\tfrac1{10}\,3^{-\mu}N^{-\beta}
\ge\tfrac1{10}\,3^{-\mu}2^{-\beta}\delta^{\beta/\rho},
\]
since $\delta^{-1/\rho}\le N\le2\delta^{-1/\rho}$. Moreover, by orthonormality
\[
\|Gg\|_{\ell_p}=3^{-\mu}N^{-\mu-1/s+1/p}=3^{-\mu}N^{-\rho}\le\delta .
\]
The two-point estimate and passing to the
infimum over $\Psi$ give the lower bound in \eqref{eq:regular-minimax}.

\textit{Item} (i) \textit{for $s\ge4/3$ or $p\le4/3$.} Let $\Omega$ and $\nu$ be from (i). The operator $\psi=\psi_\nu\circ P_\Omega$
is defined on the whole of $\ell_{\max\{p,2\}}$ and takes values in $C(\mathbb R)$ as a finite
sum of Hermite functions, so that $\psi G\in\Psi$ and
$\psi(Gf+\bxi)=\psi_\nu(G_\Omega f+P_\Omega\bxi)$. Therefore the error \eqref{eq:localerror}
equals $e_\delta(W_s^\mu,\psi G,C(\mathbb R),\ell_p)\ge
\mathcal E_\delta(W_s^\mu,\Psi,C(\mathbb R),\ell_p)$, and \eqref{eq:diaglow} follows from the already
proved lower bound in (ii) with the same constant $c(s,p,\mu)$.

\textit{Item} (i) \textit{for $s<4/3<p$.} Here $\bar a(s)=b(s)$, $\beta=\mu-b(s)$ and
$a_+(p)=a(p)>0$. We extend the factors by zero: $\nu_k=0$ for $k\notin\Omega$. Then
the method multiplies the $k$-th data coefficient by $\nu_k$ for all $k\ge0$; for an
unused coefficient $\nu_k=0$ and $|1-\nu_k|=1$, and to the data $Gf+\bxi$ the method
assigns the function $\sum_{k\ge0}\nu_k \left(a_k(f)+\xi_k \right)h_k$.

Set
$$
M=\lceil\delta^{-1/\rho}\rceil,
\qquad
\Omega_M=\{M,M+1,\ldots,3M\} .
$$
Since $\delta<\delta_0$, we have $M\ge N_0$; moreover, $\delta^{-1/\rho}\le M\le2\delta^{-1/\rho}$.
The set $\Omega_M$ contains the $2M+1$ integer points of the interval $[M,3M]$. Let
$x_0=\kappa_0 M^{-1/2}$ be the point from item~{\rm(i)} of Lemma \ref{lem:block}. Set
$$
\mathcal K=\left\{k\in\Omega_M:\ |1-\nu_k|<\tfrac12\right\} .
$$
Every $k\in\mathcal K$ satisfies $|\nu_k|>1/2$, in particular $\nu_k \ne0$, and therefore
lies in $\Omega$ and is used by the method. Every $k\in\Omega_M \setminus\mathcal K$ satisfies
$|1-\nu_k|\ge1/2$. We consider two cases.

\textit{Case} (a): $\card(\Omega_M \setminus\mathcal K)\ge M$. Choose
$\Lambda\subset\Omega_M \setminus\mathcal K$ with $\card(\Lambda)=M$; Lemma \ref{lem:block}
applies to $\Lambda$. Take $\bxi=\mathbf 0$, the subset $W\subset\Lambda$ and the point $x_W$
from Lemma \ref{lem:block}, for which $\card(W)\ge\kappa M^{1/3}$ and
$|h_k(x_W)|\ge c_1M^{-1/12}$ for $k\in W$. Set
$a_k=(3M)^{-\mu}\card(W)^{-1/s}\sigma_k$ for $k\in W$ with
$\sigma_k=\mathrm{sign}\bigl((1-\nu_k)h_k(x_W)\bigr)$, $a_k=0$ outside $W$, and
$f=\sum_{k}a_k h_k$. On $W$ we have $\underline k^{\,\mu}|a_k|\le\card(W)^{-1/s}$, therefore
$\|f\|_{s,\mu}\le1$. The error of the method on these data is
$\bigl\|\sum_k(1-\nu_k)a_k h_k \bigr\|_\infty$, and at the point $x_W$ all terms of this sum
are nonnegative by the choice of signs, so that
$$
e_\delta(W_s^\mu,\psi_\nu G_\Omega,C(\mathbb R),\ell_p)
\ge\sum_{k\in W}|1-\nu_k|\,|a_k|\,|h_k(x_W)|
\ge c\,M^{-\mu}\card(W)^{1-1/s}M^{-1/12}
$$
$$
\ge c\,M^{-\mu+1/3-1/(3s)-1/12}
= c\,M^{-\beta},
$$
where we used $1-1/s\ge0$, $\card(W)\ge\kappa M^{1/3}$ and the equality
$1/3-1/(3s)-1/12=b(s)$; recall that $b(s)=1/4-1/(3s)$ by \eqref{exps}, and
$\beta=\mu-\bar a(s)=\mu-b(s)$ by \eqref{beta}, since $\bar a(s)=b(s)$ for $s<4/3$.

\textit{Case} (b): $\card(\Omega_M \setminus\mathcal K)\le M-1$. Then
$\card(\mathcal K)\ge(2M+1)-(M-1)=M+2$. Take $f=0$ and the error
$$
\xi_k=\delta\,\card(\mathcal K)^{-1/p}\,\mathrm{sign}\bigl(\nu_k h_k(x_0)\bigr),
\quad k\in\mathcal K,
\qquad
\xi_k=0,\quad k\notin\mathcal K ,
$$
so that $\|\bxi\|_{\ell_p}=\delta$, and for $p=\infty$ here $\card(\mathcal K)^{-1/p}=1$.
Every $k\in\mathcal K$ is used by the method, to these data the method assigns the function
$\sum_{k\in\mathcal K}\nu_k \xi_k h_k$, and at the point $x_0$ all terms are nonnegative.
By Lemma \ref{lem:block} $|h_k(x_0)|\ge c\,k^{-1/4}\ge c\,(3M)^{-1/4}$ for $k\in[M,3M]$,
therefore
$$
e_\delta(W_s^\mu,\psi_\nu G_\Omega,C(\mathbb R),\ell_p)
\ge\delta\,\card(\mathcal K)^{-1/p}\sum_{k\in\mathcal K}|\nu_k|\,|h_k(x_0)|
$$
$$
\ge c\,\delta\,\card(\mathcal K)^{1-1/p}M^{-1/4}
\ge c\,\delta\,M^{a(p)} ,
$$
where we used $|\nu_k|>1/2$ on $\mathcal K$, $\card(\mathcal K)\ge M$ and
$1-1/p\ge0$; recall that $a(p)=3/4-1/p$ by \eqref{exps}.

It remains to balance. From $M\ge\delta^{-1/\rho}$ it follows that
$\delta M^{a(p)}\ge\delta^{1-a(p)/\rho}=\delta^{e_{\mathrm{tr}}}$, which gives
\eqref{eq:diaglow} in case (b). From $M\le2\delta^{-1/\rho}$ it follows that
$M^{-\beta}\ge2^{-\beta}\delta^{e_{\mathrm{tr}}}$, which gives \eqref{eq:diaglow} in case (a).
The quantity $M$ is defined by $\delta$, $s$, $p$, $\mu$ before $\Omega$ and $\nu$ are named, and
the other factors are $3^{-\mu}$, $\kappa^{1-1/s}$, $2^{-\beta}$, $3^{-1/4}$, the absolute
constants of Lemma \ref{lem:block} and the number $1/2$ from the definition of the set $\mathcal K$. Therefore the constant in \eqref{eq:diaglow}
depends neither on $\Omega$, nor on $\nu$, nor on $\card(\Omega)$; about the factors we used
only that they are real.

\textit{Item} (iii) \textit{and the upper bound in} (ii). By Theorem \ref{thm:accuracy} with
$N_*=\lceil\delta^{-1/\rho}\rceil$, $\gamma_1=1$, $\gamma_2=2$ the error of a
fixed method of the class $\mathcal S^\theta$ with $\theta>\beta$ does not exceed
$C\delta^{e_{\mathrm{tr}}}$ for $s,p\ne4/3$ and $C\delta^{e_{\mathrm{tr}}}(\log(\mathrm e/\delta))^{1/4}$
for $s=4/3$ or $p=4/3$; this is (iii). A finite sum defines an operator on the whole
data space, so this method belongs to $\Psi$. The Fourier method $\nu_k^N=1$
satisfies \eqref{qual} with $C(\nu)=1$ and $\theta=\beta+1$, so its constant depends
only on $s,p,\mu$; these are the upper bounds in (ii), including at the critical points. $\Box$

\vskip 2mm

\begin{remark}\label{rem:wellposed}\rm
Following \cite[\S\,2.8]{LuP}, we call a problem well-posed if its solution can be approximated with
accuracy of order $O(\delta)$. Let $\mu>\max\{0,\bar a(s)\}$, as in Theorem~\ref{thm:diag}.
Theorem~\ref{thm:diag}\,(ii), (iii) implies that for $1\le p<4/3$ and $s\ne4/3$ the problem of
summation of the Fourier--Hermite series from perturbed coefficients in the metric of ${\cal L}_\infty(\mathbb R)$ is well-posed: here $e_{\mathrm{tr}}=1$,
so that
$$
c\,\delta\le\mathcal E_\delta(W_s^\mu,\Psi,C(\mathbb R),\ell_p)\le C\,\delta ,
$$
and the order $\delta$ is attained by every method of the class $\mathcal S^\theta$ with $\theta>\beta$ at the
truncation level $N_*$. The reason is that for $p<4/3$ the data error is not amplified:
the last part of estimate \eqref{eq:sigma} of Lemma~\ref{lem:noise} does not depend on $N$. For $p>4/3$
we have $e_{\mathrm{tr}}<1$, and the order $\delta$ is attained by no finite diagonal
method (Theorem~\ref{thm:diag}\,(i)), and for $s\ge4/3$ by no recovery
operator either (Theorem~\ref{thm:diag}\,(ii)), so that the problem is ill-posed. For $s<4/3<p$
this conclusion is established only for finite diagonal methods. For $p=4/3$
or $s=4/3$ a logarithmic factor remains between the bounds of Theorem~\ref{thm:diag}. For
comparison: in the problem of summation of Fourier--Laguerre series in the weighted uniform metric
\cite{SolodkyVolynets2026} well-posedness holds only for $p=1$.
\end{remark}

\section{Numerical examples}\label{sec:numerics}

Consider $s=p=2$ and $\mu=4$. Then $\bar a(s)=a_+(p)=1/4$, $\beta=15/4$,
$\rho=4$ and $e_{\mathrm{tr}}=15/16$. By Theorem~\ref{thm:diag}\,(ii), (iii)
diagonal methods are in this case order-optimal among all recovery operators.
We compare the Fourier method and the Zygmund method with the factors
$$
\nu_k^N=1-(k/N)^4,\qquad 0\le k\le N .
$$
Both methods satisfy \eqref{qual} with $\theta=4>\beta$;
the level is chosen by the theory, without fitting to the exact function:
$N_\delta=\lceil\delta^{-1/4}\rceil$.
The examples show the behaviour of the methods on two fixed functions with one
deterministic noise vector and do not measure the largest error on the class.

\subsection{Two functions on the whole line}

The first function, before normalization, has two maxima at $x=\pm\sqrt{17/2}$
and a minimum at $x=0$:
$$
u_1(x)=|x|^{17/2}e^{-x^2/2} .
$$
The second example is based on the problem of recovering the function $\sin x+2$ from inexact
Hermite coefficients in \cite[Table~1, function~$f_3$]{AmatoDeFeis}.
There the space with the weight $e^{-x^2}$ and orthonormal polynomials are used;
multiplication by $e^{-x^2/2}$ turns its coefficients into the coefficients of the Hermite functions.
Therefore in our normalization we take
$$
u_2(x)=e^{-x^2/2}(2+\sin x) .
$$

The coefficients of the first function are
$$
a_{1,2m}=\frac{(-1)^m2^m\Gamma(19/4)(-17/4)_m}
                  {\pi^{1/4}\sqrt{(2m)!}},\qquad a_{1,2m+1}=0,
$$
where $(a)_m=\prod_{j=0}^{m-1}(a+j)$, $(a)_0=1$.
The formula follows from the identity $H_{2m}(x)=(-1)^m2^{2m}m!L_m^{(-1/2)}(x^2)$
and integration of the finite sum for the Laguerre polynomial.
Since $|a_{1,2m}|\asymp m^{-5}$, the coefficient norm is finite exactly for
$\mu<9/2$ and diverges for $\mu\ge9/2$. Thus the first example has a
finite smoothness bound and belongs to our class for $\mu=4$.

For the second example
$$
a_{2,k}=\frac{\pi^{1/4}}{\sqrt{k!}}\left[
 2\,\mathbf1_{\{k=0\}}+e^{-1/4}\operatorname{Im}\left(\frac{i}{\sqrt2}\right)^k\right].
$$
Here $\mathbf1_{\{k=0\}}$ equals $1$ for $k=0$ and $0$ for $k\ge1$: the term
$2e^{-x^2/2}=2\pi^{1/4}h_0(x)$ contributes only to the coefficient $a_{2,0}$.
For the term $e^{-x^2/2}\sin x=\operatorname{Im}e^{-x^2/2+ix}$ we take $d=i$ below.
For the generating function $\sum_{k\ge0}H_k(x)t^k/k!=e^{2xt-t^2}$ we have
$\int_{\mathbb R}e^{2xt-t^2}e^{-x^2+dx}\,dx=\sqrt\pi e^{d^2/4+dt}$. Expanding both sides in
powers of $t$ and equating the coefficients of equal powers, we obtain
$\int_{\mathbb R}H_k(x)e^{-x^2+dx}\,dx=\sqrt\pi\,e^{d^2/4}d^k$; after division by
$(2^{k}k!\sqrt{\pi})^{1/2}$ the coefficient of the function $e^{-x^2/2+dx}$ equals
$$
\frac{\pi^{1/4}e^{d^2/4}}{\sqrt{k!}}\left(\frac d{\sqrt2}\right)^k .
$$
Since $|a_{2,k}|\le3\pi^{1/4}/\sqrt{k!}$, the coefficients of the second function decay faster than
any power of $k$, and its norm is finite for every $\mu>0$.
In this example, too, we take $\mu=4$.
Set $f_j=u_j/R_j$, where $R_j=\|u_j\|_{2,4}$, $j=1,2$, so that $\|f_j\|_{2,4}=1$;
$R_1\approx221549.52$ and $R_2\approx16.5466$.

\subsection{Noise, errors and figures}

One vector $v=(v_0,\ldots,v_{100})$ is chosen in advance.
Its components are pseudorandom values of the standard normal distribution;
they are produced by the generator \texttt{default\_rng} of the NumPy library with the seed $1892026$.
For each level
$\delta$ we normalize its used initial part:
$$
\xi_k=\frac{\delta v_k}{(\sum_{j=0}^{N_\delta}v_j^2)^{1/2}},\quad 0\le k\le N_\delta,
\qquad \xi_k=0,\quad k>N_\delta .
$$
Thus the noise norm on the used coefficients equals $\delta$ exactly;
both methods receive the same data.

Tables~\ref{tab:power_bumps}--\ref{tab:amato_sin} contain the numerical values of the uniform error
$\|f_j-S_{N_\delta}^\nu(\mathbf f_j+\boldsymbol\xi)\|_\infty$.
The maximum was estimated on $[-25,25]$ with step $1/256$ and compared with step $1/128$.
The gaps between the nodes were controlled by an estimate of the second derivative of the error,
and outside this interval by the decay of the functions and of the finite sums.

\begin{table}[ht]
\centering
\caption{Errors of recovery of the function $f_1$}\label{tab:power_bumps}
\begin{tabular}{p{2cm}p{1cm}p{4cm}p{4cm}}
\hline $\delta$ & $N_\delta$ & Fourier & Zygmund, $\theta=4$ \\ \hline
$10^{-4}$ & 10 & $7.127\cdot10^{-5}$ & $5.528\cdot10^{-5}$ \\
$10^{-5}$ & 18 & $7.002\cdot10^{-6}$ & $6.040\cdot10^{-6}$ \\
$10^{-6}$ & 32 & $7.493\cdot10^{-7}$ & $7.180\cdot10^{-7}$ \\
$10^{-7}$ & 57 & $6.990\cdot10^{-8}$ & $7.099\cdot10^{-8}$ \\
$10^{-8}$ & 100 & $6.193\cdot10^{-9}$ & $7.394\cdot10^{-9}$ \\
\hline\end{tabular}
\end{table}

\begin{table}[ht]
\centering
\caption{Errors of recovery of the function $f_2$}\label{tab:amato_sin}
\begin{tabular}{p{2cm}p{1cm}p{4cm}p{4cm}}
\hline $\delta$ & $N_\delta$ & Fourier & Zygmund, $\theta=4$ \\ \hline
$10^{-4}$ & 10 & $7.084\cdot10^{-5}$ & $9.515\cdot10^{-5}$ \\
$10^{-5}$ & 18 & $7.013\cdot10^{-6}$ & $1.019\cdot10^{-5}$ \\
$10^{-6}$ & 32 & $7.500\cdot10^{-7}$ & $1.025\cdot10^{-6}$ \\
$10^{-7}$ & 57 & $6.996\cdot10^{-8}$ & $1.070\cdot10^{-7}$ \\
$10^{-8}$ & 100 & $6.193\cdot10^{-9}$ & $9.746\cdot10^{-9}$ \\
\hline\end{tabular}
\end{table}

\IfFileExists{power_bumps.pdf}{%
\IfFileExists{amato_sin.pdf}{%
Figures~\ref{fig:power_bumps}--\ref{fig:amato_sin} show the neighbouring levels
$10^{-4}$ and $10^{-5}$. At the coarser noise the deviation of both methods does not exceed
$15\%$ of the maximum modulus of the exact function, and at the next level the error decreases
for both methods by more than a factor of $1.5$. The first neighbouring pair
satisfying these two conditions is chosen. The upper panels show the functions,
the lower ones the differences with the exact function; the axis limits within each pair are the same.
For the smooth second example the improvement is visible in the lower panels.

\begin{figure}[ht]
\centering
\includegraphics[width=\textwidth]{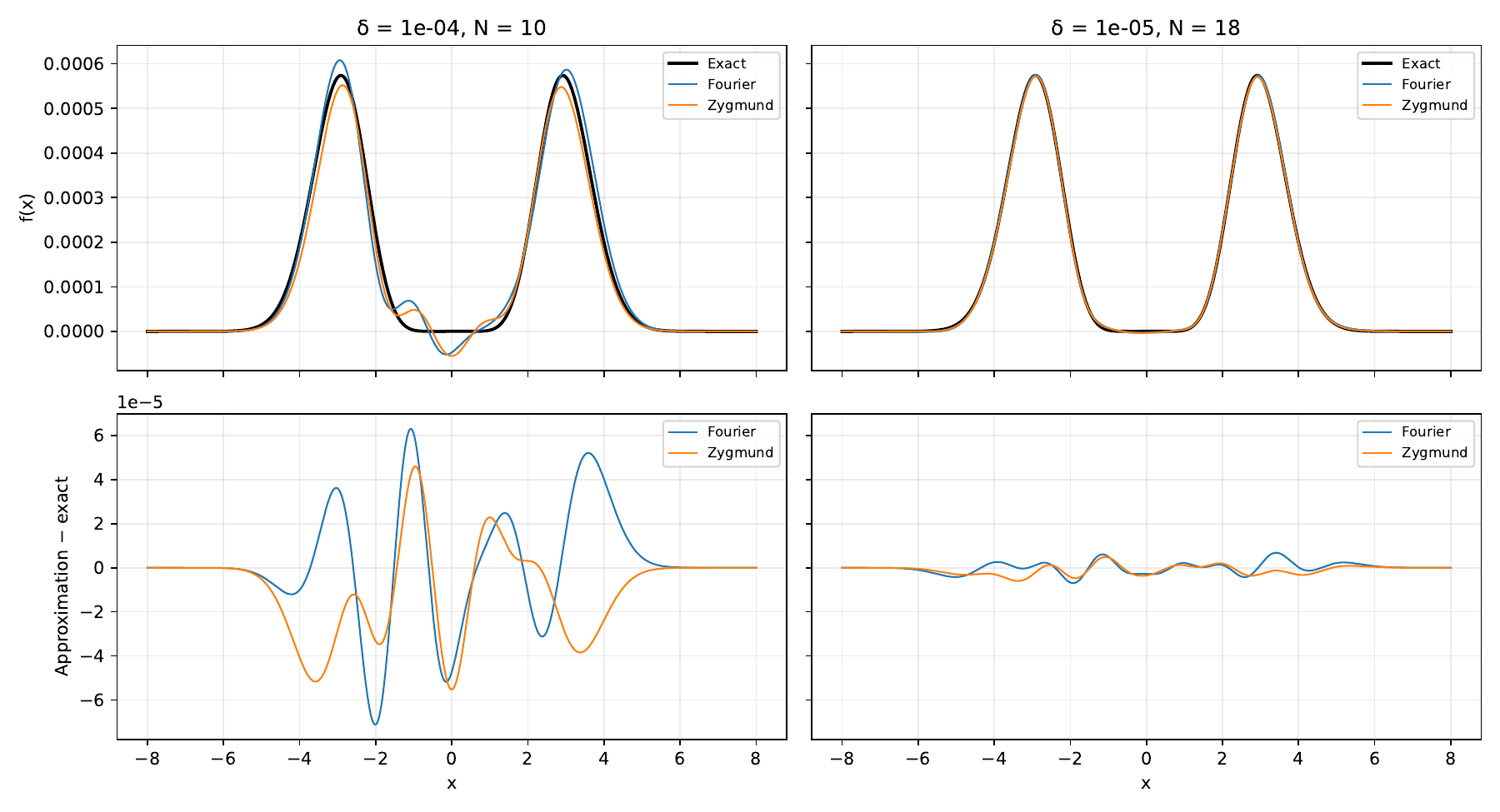}
\caption{Recovery of $f_1$: two maxima and a minimum between them.
Left $\delta=10^{-4}$, right $10^{-5}$.}\label{fig:power_bumps}
\end{figure}

\begin{figure}[ht]
\centering
\includegraphics[width=\textwidth]{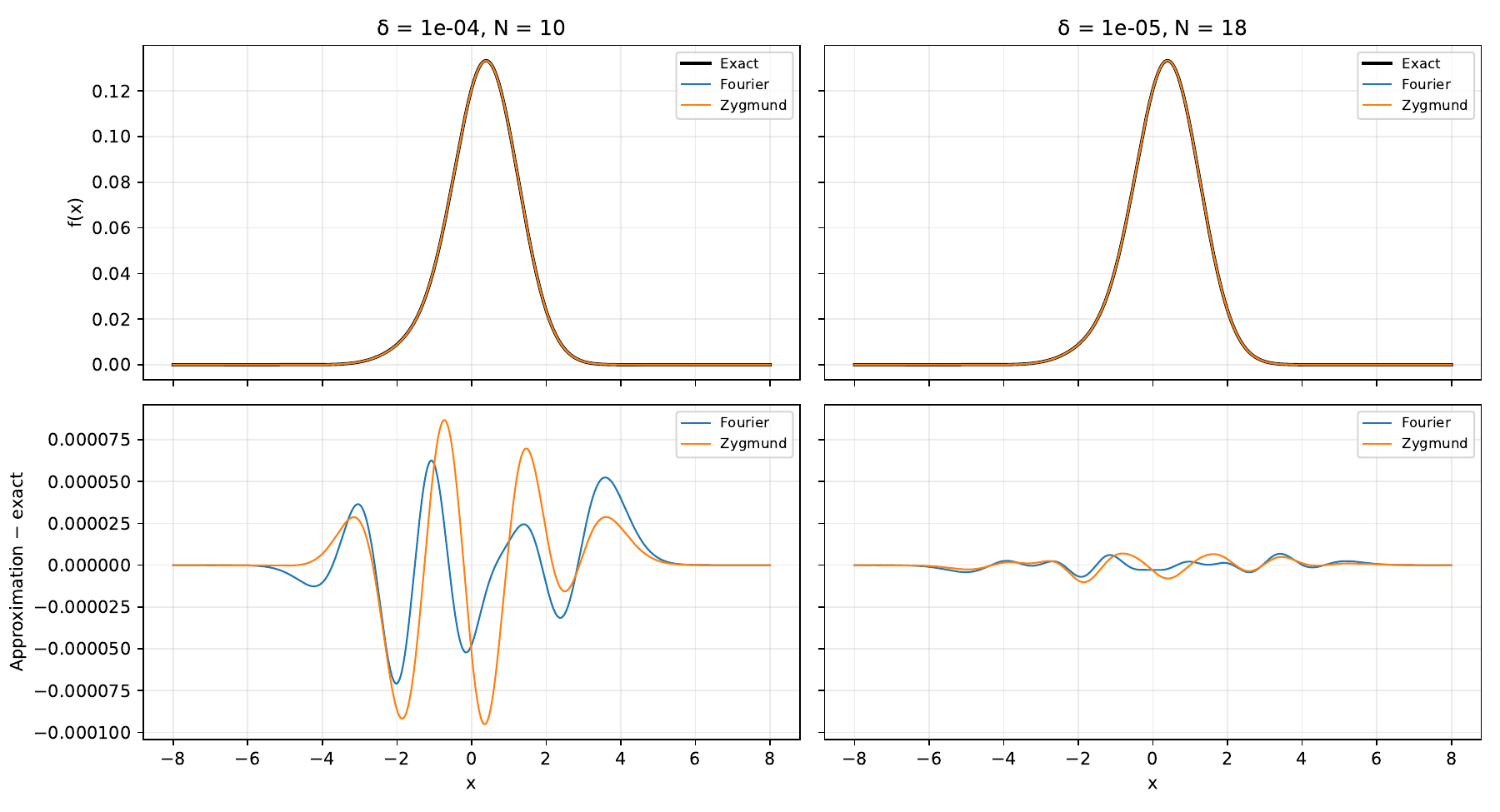}
\caption{Recovery of $f_2$, adapted from the example of Amato--De Feis:
left $\delta=10^{-4}$, right $10^{-5}$.}\label{fig:amato_sin}
\end{figure}

}{\typeout{Figures omitted: amato_sin.pdf not supplied.}}%
}{\typeout{Figures omitted: power_bumps.pdf not supplied.}}

\FloatBarrier

\end{document}